\theoremstyle{plain}
\newtheorem{theorem}{Theorem}[section]
\newtheorem{lemma}[theorem]{Lemma}
\theoremstyle{definition}
\theoremstyle{remark}
\newtheorem{remark}[theorem]{Remark}
\newtheorem*{remark*}{Remark}
\numberwithin{equation}{section}
\newcommand\D{{\mathcal D}}
\newcommand\A{{\mathcal A}}
\newcommand\CC{{\mathbb C}}
\newcommand\RR{{\mathbb R}}
\newcommand\PP{{\mathbb P}}
\title[$\D$-operators, orthogonal polynomials and $q$-difference equations]
{Using $\D$-operators to construct orthogonal polynomials satisfying  higher order $q$-difference equations}
\author{Renato Álvarez-Nodarse and Antonio J. Dur\'an}
\address{A. J. Dur\'an \\
Departamento de An\'{a}lisis Matem\'{a}tico \\
Universidad de Sevilla \\
Apdo (P. O. BOX) 1160\\
41080 Sevilla. Spain.}
\email{duran@us.es}
\address{R. Álvarez-Nodarse \\
Departamento de An\'{a}lisis Matem\'{a}tico \\
Universidad de Sevilla \\
Apdo (P. O. BOX) 1160\\
41080 Sevilla. Spain.}
\email{ran@us.es}
\thanks{Partially supported by MTM2012-36732-C03-03 (Ministerio de Economía y Competitividad),
FQM-262, FQM-4643, FQM-7276 (Junta de Andalucía) and Feder Funds (European
Union).}
\subjclass[2010]{33D45, 33E30, 42C05}
\keywords{$q$-difference operators; $q$-Meixner orthogonal polynomials; $q$-Laguerre orthogonal polynomials;
$q$-Krall polynomials.}
   \date{}
\begin{document}
   \maketitle

   \begin{abstract}
  Let  $(p_n)_n$ be either the $q$-Meixner or the $q$-Laguerre polynomials. We form a new sequence of polynomials $(q_n)_n$ by considering a linear combination of two consecutive $p_n$: $q_n=p_n+\beta_np_{n-1}$, $\beta_n\in \RR$. Using the concept of $\D$-operator, we generate sequences $(\beta _n)_n$ for which the polynomials $(q_n)_n$ are orthogonal with respect to a measure and common eigenfunctions of a higher order $q$-difference operator.
\end{abstract}

\section{Introduction}
The most important families of orthogonal polynomials are the classical, classical discrete or $q$-classical families (Askey scheme and its $q$-analogue). Besides the orthogonality, they are also common eigenfunctions of a second order differential, difference or $q$-difference operator, respectively.

The  issue of orthogonal polynomials which are also common eigenfunctions of a higher order differential operator was raised by H.L. Krall in 1939, when he obtained a complete classification for the case of a differential operator of order four (\cite{Kr2}). After his pioneer work, orthogonal polynomials which are also common eigenfunctions of higher order differential operators are usually called Krall polynomials.  Since the eighties a lot of effort has been devoted to find Krall polynomials. Roughly speaking, one can construct Krall polynomials by using the Laguerre $x^\alpha e^{-x}$, or Jacobi weights $(1-x)^\alpha(1+x)^\beta$, assuming that one or two of the parameters $\alpha$ and $\beta$ are nonnegative integers and adding a linear combination of Dirac deltas and their derivatives at the endpoints of the orthogonality interval (\cite{Kr2}, \cite{koekoe}, \cite{koe} \cite{koekoe2}, \cite{L1}, \cite{L2}, \cite{GrH1}, \cite{GrHH}, \cite{GrY}, \cite{Plamen1}, \cite{Plamen2}, \cite{Zh}).

Discrete versions of Krall problem appeared at the beginning of the nineties. Richard Askey posed in 1991 (see page 418 of \cite{BGR}) the problem of finding orthogonal polynomials which are also common eigenfunctions of a higher order difference operator (discrete Krall polynomials), and suggested that such examples could be found by adding a Dirac delta at $0$ to the Meixner measure. Bavinck, van Haeringen and Koekoek answered in the negative Askey's question (see  \cite{BH} and \cite{BK}). Actually, the first examples of discrete Krall polynomials have appeared very recently: it has been introduced by one of us in 2012 (see \cite{du1}). Orthogonalizing measures for these families of polynomials are generated by multiplying the classical discrete weights of Charlier, Meixner, Krawtchouk and Hahn by certain variants of the annihilator polynomial of a set of numbers. This kind of transformation which consists in multiplying a measure times a polynomial is called a Christoffel transform.

As far as the authors know the first examples of $q$-Krall polynomials were introduced by Grünbaum and Haine in 1996 \cite{GrH2}.
By applying Darboux transform to the little $q$-Jacobi polynomials $p_n^{a,b;q}$, with $a=q$, they produced polynomials which are eigenfunctions of a fourth order $q$-difference operator. Those polynomials are also orthogonal with respect to the little $q$-Jacobi measure (with parameter $a=1$) plus a Dirac delta at $0$. They obtained a similar result for the $q$-Laguerre polynomials $p_n^{\alpha ;q}$ with $\alpha =1$. Using a different approach, Vinet and Zhedanov \cite{VZ} generalized Grünbaum-Haine results for the little $q$-Jacobi polynomials with parameter $a=q^j$, and for the little $q$-Laguerre polynomials also with parameter $a =q^j$, where $j$ is a positive integer in both cases. We remark that in those examples, the orthogonality measures for the new polynomials follow the same pattern that for the  Krall polynomials.
Something similar was obtained by Haine and Iliev \cite{HI} for the Askey-Wilson polynomials $(p_n^{a,b,c,d;q})_n$:  assuming that $a=\pm q^{\alpha +l/2}$, $d=q^{l/2}$, with $\alpha , l $ positive integers, they show that by adding Dirac deltas (with arbitrary weights) at points of the form $\pm (q^{k+l/2}+q^{-(k+l/2)})$, $0\le k\le \alpha -1$,
to the Askey-Wilson measure, orthogonal polynomials can be generated which are also eigenfunctions of $q$-difference operators of arbitrary orders.

In the paper  \cite{DD} one of us has introduced the concept of $\D$-operator associated to a sequence of polynomials
$(p_n)_n$ and an algebra $\A$ of operators acting in the linear space of polynomials (see Section 3 below). In that paper,  $\D$-operators have been used to provide a method which generates Krall and discrete Krall  polynomials.
The purpose of this paper is to show that $\D$-operators are also a very useful tool to generate $q$-Krall  polynomials.

To do that, we consider in this paper $q$-Meixner and $q$-Laguerre polynomials, and the algebra $\A _q$ formed by all finite order $q$-difference
operators $D:\PP \to \PP $ of the form
\begin{equation}\label{algdiff}
D(p)=\sum_{j=s}^rf_j(x)p(xq^{j}),\quad \mbox{$s$ and $r$ integers with $s\le r$},
\end{equation}
where $f_j$, $j=s,\cdots r$, are rational functions (the order of $D$ is then $r-s$). Denote both $q$-classical families of polynomials by $(p_n)_n$. Using $\D$-operators, we produce sequences of numbers $(\beta _n)_n$ such that the polynomials $q_n(x)=p_n(x)+\beta_np_{n-1}(x)$, ($p_{-1}=0$), are orthogonal with respect to a measure and common eigenfunctions of a higher order $q$-difference operator $D\in \A _q$.

For $q$-Meixner polynomials we have found three different $\D$-operators, and then we produce such sequences $(\beta _n)_n$ in three different ways (Section 4). In the three cases, the orthogonality measures for the polynomials $(q_n)_n$ turn out to be the product of certain polynomials times the $q$-Meixner measure (that is, Christoffel transform of the $q$-Meixner measure). To make this introduction more useful to the reader, let us provide one of our results in more details. Denote by $(m_{n}^{b,c;q})_n$ the sequence of $q$-Meixner
polynomials (see (\ref{defmep}) below). For $0<q<1$, $0\le bq<1$ and $c>0$, the $q$-Meixner polynomials are
orthogonal with respect to the positive measure denoted by $\rho _{b,c}^q$ (see (\ref{mew}) below). Fixed a positive integer $k$ and assume that $m_{k}^{-c,1/(bc);q}(q^{n})\not =0$, $n\ge 1$. Define now the sequence of numbers $(\beta _n)_{n\ge 1}$  by
$$
\beta_n=\frac{m_{k}^{-c,1/(bc);q}(q^{n+1})}{m_{k}^{-c,1/(bc);q}(q^{n})},\quad n\ge 1.
$$
Notice that we have a sequence $(\beta _n)_n$ for each $k\ge 1$. Then, the polynomials $q_n(x)=m_{n}^{b,c;q}(x)+\beta _nm_{n-1}^{b,c;q}(x)$,  ($m_{-1}^{b,c;q}=0$), are orthogonal with respect to the positive measure
$$
(x+bcq) \cdots ( x+bcq^{k}) \rho_{b,q^{k+1}c}^q.
$$
Moreover, they are also eigenfunctions of a $q$-difference operator of
order $2k+2$. Notice that neither the parameter $b$ nor the parameter $c$ need to be a positive integer.

This way of producing $q$-Krall orthogonal polynomials  is different from what one can find in the literature (\cite{GrH2},\cite{VZ},\cite{HI}).
As we have mentioned above,  the orthogonality measures for the extant $q$-Krall polynomials are formed by adding Dirac deltas to some particular instances of
$q$-classical measures. In the case of the $q$-Meixner polynomials, the pattern we have found in this paper is to multiply the $q$-Meixner weight by certain polynomials. Hence, it is  similar to what happens in the discrete case.
In fact, the results in Section 4 of this paper give rise to the following conjecture:
\medskip

\noindent
\textbf{Conjecture A.} Let $F_1$, $F_2$ and $F_3$ be three finite sets of positive integers (the empty set is allowed), then the orthogonal polynomials with respect to the  measure (if there exist)
$$
\tilde \rho_{F_1,F_2,F_3}^{b,c;q}=\left(\prod_{f\in F_1}(x+bc/q^f)\right) \left(\prod_{f\in F_2}(x-bq^ {f+1})\right)\left(\prod_{f\in F_3}(x-1/q^f)\right)\rho_{b,c}^q
$$
are eigenfunctions of a $q$-difference operator of order
$$
\sum_{i=1}^3\left( 2\sum_{f\in F_i}f-n_{F_i}(n_{F_i}-1)\right) +2
$$
where $n_{F_i}$ is the number of elements of $F_i$.

That sequence of orthogonal polynomials always exists when the measure $\tilde \rho_{F_1,F_2,F_3}^{b,c;q}$ is positive, which happens for any $F_1$, $F_2$ and, for instance, $F_3=\{k,k+1,\cdots , k+2m-1\}$.
\bigskip

For the $q$-Laguerre polynomials $(L_n^{\alpha ;q})_n$ we have found two $\D$-operators (Section 5). Using them, we show that from $q$-Laguerre polynomials one can generate $q$-Krall polynomials using both procedures discussed above, that is, multiplying the $q$-Laguerre weight by certain polynomials (no constraint then on the parameter $\alpha$) or adding to the $q$-Laguerre weight a Dirac delta at $0$ (assuming that $\alpha$ is a positive integer). In this last case, we recover the results by Vinet-Zhedanov \cite{VZ}.

\section{Preliminaries}
Along this paper, $q$ denotes a real number $q\not =\pm 1$. The $q$-derivative $D_q$ is then defined as:
\begin{equation}\label{defdq}
D_qf=\frac{f(qx)-f(x)}{x(q-1)}.
\end{equation}

For a linear operator $D:\PP \to \PP$ and a polynomial $P(x)=\sum _{j=0}^ka_jx^j$, the operator $P(D)$ is defined in the usual way
$P(D)=\sum _{j=0}^ka_jD^j$.

As usual $(a;q)_j$ will denote the $q$-Pochhammer symbol defined by
\begin{align*}
(a;q)_0=1,\quad \quad (a;q)_j&=(1-a)(1-aq)\cdots (1-aq^{j-1}),\quad \mbox{for $j\ge 1$, $a\in \CC$.}\\
(a;q)_\infty&=\prod_{j=0}^\infty(1-aq^j).
\end{align*}
We also define
$$
(a,\cdots , b;q)_j=(a;q)_j\cdots (b;q)_j.
$$

Let $\mu$ be a moment functional on the real line, that is, a linear mapping $\mu :\PP \to \RR$.
The $n$-th moment of $\mu $ is defined by $\mu_n=\langle \mu, x^n\rangle $.
It is well-known that any moment functional on the real line can be represented by integrating with respect to a Borel measure
(positive or not) on the real line
(this representation is not unique \cite{du0}).
If we also denote this measure by $\mu$, we have $\langle \mu,p\rangle=\int p(x)d\mu(x)$ for all polynomial $p\in \PP$. Taking this into account,
we will conveniently use along this paper one or other terminology (orthogonality with respect to a moment functional or with
respect to a measure). We say that a sequence of polynomials $(p_n)_n$, $p_n$ of degree $n$, $n\ge 0$, is orthogonal with respect to the moment functional $\mu$ if $\langle \mu, p_np_m\rangle=0$, for $n\not =m$.

Favard's Theorem establishes that a sequence $(p_n)_n$ of polynomials, $p_n$ of degree $n$, is orthogonal with respect to a moment functional if
and only if it satisfies
a three term recurrence relation of the form ($p_{-1}=0$)
$$
xp_n(x)=a_np_{n+1}(x)+b_np_n(x)+c_np_{n-1}(x), \quad n\ge 0,
$$
where $(a_n)_n$, $(b_n)_n$ and $(c_n)_n$ are sequences of real numbers with
$a_{n-1}c_n\not =0$, $n\ge 1$. If, in addition, $a_{n-1}c_n>0$, $n\ge 1$,
then the polynomials $(p_n)_n$ are orthogonal with respect to a moment functional which can be represented by a positive measure, and conversely.

\bigskip
The kind of transformation which consists in multiplying a moment functional $\mu$ times a polynomial $r$ is called a Christoffel transform. The new
moment functional $r\mu$ is defined by $\langle r\mu,p\rangle =\langle \mu,rp\rangle $.
It has a long tradition in the context of orthogonal polynomials: it goes back a century and a half ago when
E.B. Christoffel (see \cite{Chr} and also \cite{Sz}) studied it for the particular case $r(x)=x$.

For a real number $\lambda$, the moment functional $\mu (x+\lambda)$ is defined in the usual way
$\langle \mu (x+\lambda),p\rangle =\langle \mu ,p(x-\lambda)\rangle $. Hence, if $(p_n)_n$ are orthogonal polynomials with respect
to $\mu$ then $(p_n(x+\lambda))_n$ are orthogonal with respect to $\mu(x+\lambda)$.

We will use the following straightforward Lemma (Lemma 2.1 of \cite{DD}) to construct orthogonal
polynomials with respect to a measure together with a Dirac delta (for related results see \cite{GPV}, \cite{Y}).

\begin{lemma}\label{addel}
From a measure $\nu$ and a real number $\lambda$, we define the measure $\mu$ by $\mu=(x-\lambda)\nu$. Assume that we have a sequence $(p_n)_n$ of
orthogonal polynomials with respect to $\mu$ and write $\alpha_n=\int p_nd\nu$.
For a given real number $M$ we write $\tilde \rho =\nu + M\delta_\lambda$ and define the numbers
\begin{equation}\label{hk2}
\beta_n=-\frac{\alpha_{n}+Mp_n(\lambda)}{\alpha_{n-1}+Mp_{n-1}(\lambda)}, \quad n\ge 1
\end{equation}
(we implicitly assume that $\alpha_{n-1}+Mp_{n-1}(\lambda)\not =0$, $n\ge 1$).
Then the polynomials defined by $q_0=1$ and $q_n=p_n+\beta_np_{n-1}$, are orthogonal with respect to $\tilde \rho$.
\end{lemma}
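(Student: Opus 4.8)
The plan is to verify directly that the polynomials $q_n$ satisfy the orthogonality relations $\langle \tilde\rho, q_nq_m\rangle=0$ for $n\neq m$. Since $q_n=p_n+\beta_np_{n-1}$ has the same leading coefficient as $p_n$, it has degree exactly $n$; and since $\langle\tilde\rho,\cdot\rangle$ is symmetric, it is enough to treat $m<n$. As $q_m$ is then a polynomial of degree at most $n-1$, the whole statement reduces to showing
$\langle\tilde\rho,\,q_n\,x^{j}\rangle=0$ for every $n\ge 1$ and every $0\le j\le n-1$.

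First I would split $\langle\tilde\rho,q_nx^{j}\rangle=\langle\nu,q_nx^{j}\rangle+Mq_n(\lambda)\lambda^{j}$ using $\tilde\rho=\nu+M\delta_\lambda$. The key elementary step is to write, for $j\ge 1$, $x^{j}=(x-\lambda)r_j(x)+\lambda^{j}$ with $r_j$ a polynomial of degree $j-1$; then, since $\mu=(x-\lambda)\nu$,
$$\langle\nu,q_nx^{j}\rangle=\langle(x-\lambda)\nu,\,q_nr_j\rangle+\lambda^{j}\langle\nu,q_n\rangle=\langle\mu,q_nr_j\rangle+\lambda^{j}\langle\nu,q_n\rangle.$$
Because $\deg r_j=j-1\le n-2$, both $p_n$ and $p_{n-1}$ are $\mu$-orthogonal to $r_j$, so $\langle\mu,q_nr_j\rangle=\langle\mu,p_nr_j\rangle+\beta_n\langle\mu,p_{n-1}r_j\rangle=0$. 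Together with the trivial case $j=0$, this gives $\langle\nu,q_nx^{j}\rangle=\lambda^{j}\langle\nu,q_n\rangle$ for all $0\le j\le n-1$, hence
$$\langle\tilde\rho,q_nx^{j}\rangle=\lambda^{j}\bigl(\langle\nu,q_n\rangle+Mq_n(\lambda)\bigr).$$

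It then remains to check that $\langle\nu,q_n\rangle+Mq_n(\lambda)=0$ for $n\ge 1$. Expanding, $\langle\nu,q_n\rangle=\alpha_n+\beta_n\alpha_{n-1}$ and $q_n(\lambda)=p_n(\lambda)+\beta_np_{n-1}(\lambda)$, so the quantity equals $\bigl(\alpha_n+Mp_n(\lambda)\bigr)+\beta_n\bigl(\alpha_{n-1}+Mp_{n-1}(\lambda)\bigr)$, which vanishes precisely by the definition \eqref{hk2} of $\beta_n$ (this is where the standing assumption $\alpha_{n-1}+Mp_{n-1}(\lambda)\neq 0$ is used, to make $\beta_n$ well defined). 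Since $q_0=1$, the family $(q_n)_n$ is then orthogonal with respect to $\tilde\rho$.

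I do not anticipate a real obstacle here: the argument is a short direct computation, and the only thing to get right is the bookkeeping of degrees in the reduction step (ensuring $\deg r_j\le n-2$ so that $p_n$ \emph{and} $p_{n-1}$ are both $\mu$-orthogonal to $r_j$) and the observation that the factor $x-\lambda$ in $\mu=(x-\lambda)\nu$ is exactly what converts an integral against $\nu$ into one against $\mu$. If one wanted $(q_n)_n$ to be orthogonal in a stronger sense (nonzero norms, a bona fide three term recurrence), additional non-degeneracy or positivity hypotheses would be needed, but the statement as given only asserts the vanishing of the mixed moments, which is what the computation delivers.
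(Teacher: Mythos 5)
Your proof is correct, and it is the standard argument: the paper itself does not prove this lemma (it imports it as Lemma 2.1 of \cite{DD}), and the computation you give --- reducing to $\langle\tilde\rho,q_nx^j\rangle=0$ for $j\le n-1$, splitting off the factor $x-\lambda$ to pass from $\nu$ to $\mu$, and observing that the residual term $\lambda^j(\langle\nu,q_n\rangle+Mq_n(\lambda))$ vanishes precisely by the choice (\ref{hk2}) of $\beta_n$ --- is exactly the intended one. The degree bookkeeping ($\deg r_j\le n-2$, so both $p_n$ and $p_{n-1}$ kill $r_j$ under $\mu$) and the separate treatment of $j=0$ are handled correctly, so there is nothing to add.
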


\section{$\D $-operators}
In this Section, we recall the basic facts about $\D$-operators (for more details see \cite{DD}).

The starting point is a sequence of polynomials $(p_n)_n$, $p_n$ of degree $n$, and an algebra of operators $\A $ acting in the linear space of polynomials.
In addition, we assume that the polynomials $p_n$, $n\ge 0$, are eigenfunctions of certain operator $D^P\in \A$. We write $(\theta_n)_n$ for the corresponding eigenvalues,
so that $D^P(p_n)=\theta_np_n$, $n\ge 0$.

Given a sequence of numbers $(\beta _n)_n$, we define a new sequence of polynomials $(q_n)_n$ by $q_0=1$ and
\begin{equation}\label{qncc}
q_n=p_n+\beta_np_{n-1}, \quad n\ge 1.
\end{equation}
Assuming that the sequence $(\beta _n)_n$ has certain appealing form, our procedure provides a new operator $D^Q\in \A$ for which the polynomials $(q_n)_n$ are eigenfunctions.
That appealing form for the sequence $(\beta _n)_n$ will be determined by the concept of $\D$-operator.

A $\D$-operator associated to the algebra $\A$ and the sequence of polynomials $(p_n)_n$ is defined from the two sequences of numbers $(\varepsilon_n)_n$ and $(\sigma_n)_n$ as follows.
We consider the operator $\D$, $\D :\PP \to \PP $, defined by linearity from
\begin{equation}\label{defTo2}
\D (p_n)=-\frac{1}{2}\sigma_{n+1} p_n(x)+\sum _{j=1}^n (-1)^{j+1}\sigma_{n+1-j}\left(\prod_{i=1}^j\varepsilon _{n-i+1}\right) p_n(x) .
\end{equation}
We then say that $\D$ is a  $\D$-operator if $\D\in \A$.

The appealing form mentioned above for the sequence $(\beta_n)_n$ (which defines the polynomials $(q_n)_n$ (\ref{qncc})) is then
$\displaystyle \beta_n =\varepsilon _n\frac{P(\theta_n)}{P(\theta_{n-1})}$, where $P$ is an arbitrary polynomial.

The method which we will use to find $q$-Krall polynomials is included in the following lemma.

\begin{lemma}\label{fl2v} (Lemma 3.2 of \cite{DD})
Let $\A$ and $(p_n)_n$ be, respectively, an algebra of operators acting in the linear space of polynomials, and a sequence of polynomials, $p_n$ of degree $n$.
We assume that $(p_n)_n$ are eigenfunctions of an operator $D^P\in \A$, that is, there exist  numbers $\theta _n$, $n\ge 0$, such that
$D^P(p_n)=\theta_np_n$, $n\ge 0$. We also have two sequences of numbers $(\varepsilon_n )_n$ and $(\sigma_n)_n$ which define a $\D$-operator  for $(p_n)_n$ and $\A$
(see (\ref{defTo2})).
For an arbitrary polynomial $P_2$ such that $P_2(\theta_n)\not =0$, $n\ge 0$, we define the sequences of numbers $(\gamma_{n\ge 1})_n$ and $(\lambda_n)_{n\ge 1}$ by
\begin{align}\label{gamp}
\gamma_{n+1}&=P_2(\theta_n),\quad n\ge 0, \\\label{lamp}
\lambda_n-\lambda_{n-1}&=\sigma_n\gamma _n, \quad n\ge 1,
\end{align}
and assume that there exists a polynomial $P_1$ such that $\lambda_{n+1}+\lambda_n=P_1(\theta_n)$, $n\ge 0$.
We finally define the sequence of polynomials $(q_n)_n$ by $q_0=1$ and
\begin{equation}\label{defqng2}
q_n=p_n+\beta_np_{n-1},\quad n\ge 1,
\end{equation}
where the numbers $\beta_n$, $n\ge 1$, are given by
\begin{equation}\label{defbetng2}
\beta_n=\varepsilon_n \frac{\gamma_{n+1}}{\gamma_n}.
\end{equation}
Then $D^Q(q_n)=\lambda _nq_n$ where the operator $D^Q$ is defined by
\begin{equation}\label{defD2}
D^Q=\frac{1}{2}P_1(D^P)+\D P_2(D^P).
\end{equation}
Moreover $D^Q\in \A$.
\end{lemma}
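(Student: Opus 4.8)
The plan is to verify directly that the operator $D^Q=\frac12 P_1(D^P)+\D P_2(D^P)$ acts on each $q_n$ as multiplication by $\lambda_n$, using the explicit definition (\ref{defTo2}) of the $\D$-operator and the eigenvalue relation $D^P(p_n)=\theta_np_n$. Since $D^P\in\A$ and $\D\in\A$ and $\A$ is an algebra, the final assertion $D^Q\in\A$ is immediate once the formula (\ref{defD2}) is in place; the content is the eigenvalue computation.

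First I would compute $\D P_2(D^P)$ applied to $p_n$. Because $P_2(D^P)(p_n)=P_2(\theta_n)p_n=\gamma_{n+1}p_n$, we get $\D P_2(D^P)(p_n)=\gamma_{n+1}\D(p_n)$, and then (\ref{defTo2}) expands this as a linear combination of $p_n,p_{n-1},\dots,p_0$ with known coefficients built from the $\sigma$'s and the partial products of the $\varepsilon$'s. Similarly $\frac12 P_1(D^P)(p_n)=\frac12 P_1(\theta_n)p_n=\frac12(\lambda_{n+1}+\lambda_n)p_n$. Next I would substitute $q_n=p_n+\beta_np_{n-1}$ with $\beta_n=\varepsilon_n\gamma_{n+1}/\gamma_n$ into $D^Q(q_n)=\frac12 P_1(D^P)(q_n)+\D P_2(D^P)(q_n)$ and expand everything in the basis $(p_j)_j$. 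The claim $D^Q(q_n)=\lambda_nq_n=\lambda_np_n+\lambda_n\beta_np_{n-1}$ then reduces to two families of scalar identities: matching the coefficient of $p_n$, matching the coefficient of $p_{n-1}$, and checking that the coefficient of $p_j$ vanishes for all $j\le n-2$.

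The coefficient of $p_n$ gives $\frac12(\lambda_{n+1}+\lambda_n)-\frac12\sigma_{n+1}\gamma_{n+1}=\lambda_n$, which is exactly (\ref{lamp}) written as $\lambda_{n+1}-\lambda_n=\sigma_{n+1}\gamma_{n+1}$; so that line is automatic. The coefficient of $p_{n-1}$ requires combining the $j=1$ term of $\D P_2(D^P)(p_n)$, namely $\gamma_{n+1}\sigma_n\varepsilon_n$, with the $p_{n-1}$-contributions coming from the $\beta_n p_{n-1}$ summand (both its $P_1$ part and the leading term of $\gamma_n\D(p_{n-1})$); using $\beta_n=\varepsilon_n\gamma_{n+1}/\gamma_n$ and again (\ref{lamp}), this collapses to $\lambda_n\beta_n$. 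The main obstacle is the vanishing of the coefficient of $p_j$ for $j\le n-2$: here one must show that the tail of $\gamma_{n+1}\D(p_n)$ in degree $j$ cancels against $\beta_n$ times the corresponding tail of $\gamma_n\D(p_{n-1})$. This is where the precise shape of (\ref{defTo2}) — the alternating signs and the telescoping products $\prod_{i=1}^{j}\varepsilon_{n-i+1}$ — is used: the degree-$j$ coefficient of $\D(p_n)$ and that of $\D(p_{n-1})$ differ by exactly the factor $\varepsilon_n\gamma_{n+1}/\gamma_n=\beta_n$ up to sign, so the two contributions cancel term by term. This is the algebraic heart of the lemma; everything else is bookkeeping. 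I would organize it by writing $\D(p_n)=\sum_{j=0}^n c_{n,j}p_j$ with $c_{n,j}$ read off from (\ref{defTo2}), establishing the one-step relation between $c_{n,j}$ and $c_{n-1,j}$, and then the cancellation is a single line.
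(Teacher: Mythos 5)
Your proposal is correct and is essentially \emph{the} proof of this lemma: the paper itself states it without proof (it is imported as Lemma 3.2 of \cite{DD}), and the argument there is exactly the direct expansion you describe --- the coefficient of $p_n$ reduces to (\ref{lamp}), the coefficient of $p_{n-1}$ follows from $\beta_n\gamma_n=\varepsilon_n\gamma_{n+1}$ together with (\ref{lamp}), and for $j\le n-2$ the coefficient of $p_j$ in $\gamma_{n+1}\D(p_n)$ is exactly $-\beta_n$ times the coefficient of $p_j$ in $\gamma_n\D(p_{n-1})$, so the tails cancel term by term as you claim. (One remark: in (\ref{defTo2}) the summand should read $p_{n-j}(x)$ rather than $p_n(x)$ --- a typo in the source, as the explicit identities such as (\ref{ide-D2}) confirm --- and you have correctly read it that way.)
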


\begin{remark}\label{rem}
For the examples considered in this paper, we have  $\theta _n=uq^n$ and $\sigma_n=vq^n$, where $u$ and $v$ are constants
independent of $n$. In this case, we can easily see that whatever the polynomial $P_2$ is, one can find a polynomial $P_1$ satisfying the hypothesis of the previous Lemma. Indeed, if $P_2(x)=\sum_{j=0}^kw_jx^j$ take
\begin{equation}\label{defP1}
P_1(x)=\frac{vqx}{u}\left(P_2(x)-2\sum_{j=0}^k\frac{w_j}{1-q^{j+1}}x^j\right).
\end{equation}
From the definition of $P_1$, it straightforwardly follows that
\begin{equation}\label{pppp}
D_q(P_1(x))=\frac{vq}{u(q-1)}(P_2(x)+qP_2(qx)).
\end{equation}
For $\lambda_0=(P_1(\theta _0)-\sigma_1P_2(\theta _0))/2$, consider the sequences $(\gamma_{n})_{n\ge 1}$ and
$(\lambda_n)_{n\ge 1}$ defined by (\ref{gamp}) and (\ref{lamp}), respectively.
We have then to check that
\begin{equation}\label{ppp}
\lambda_n+\lambda_{n+1}=P_1(uq^{n}), \quad n\ge 1.
\end{equation}
One can easily see that for $n\ge 1$ (\ref{lamp}) and (\ref{ppp}) are equivalent to
$$
P_1(uq^{n+1})-P_1(uq^{n})=vq^{n+1} P_2(uq^{n})+vq^{n+2}P_2(uq^{n+1}),
$$
which it straightforwardly follows from (\ref{pppp}) by putting $x=uq^n$.
\end{remark}

\section{$q$-Meixner case}\label{sec-mei}
In this section we will show how the method works for the $q$-Meixner polynomials.
We start by recalling the basic definitions and facts about this family of $q$-polynomials.

Let $b,c$ be two real numbers satisfying that $b\not =q^{-n}$, $n\ge 1$, and $c\not =0$.
We define the sequence of $q$-Meixner polynomials $(m_{n}^{b,c;q})_n$
by (recall that $q\not =\pm 1$)
\begin{equation}\label{defmep}
m_{n}^{b,c;q}(x)=\frac{(-1)^n}{(q;q)_n}\sum _{j=0}^n \frac{(q^{-n};q)_j(x;q)_j}{(bq;q)_j(q;q)_j}\left(-\frac{q^{n+1}}{c}\right)
^{j}
\end{equation}
(we have taken a slightly different normalization from the one used in  \cite{KLS}, pp, 488-492, from where
the next formulas can be easily derived).
The $q$-Meixner polynomials are eigenfunctions of the following second order $q$-difference operator
\begin{align}\label{sodeme}
D_{b,c}(p)&=\frac{c(x-bq)}{x^2}p(x/q)-\left( \frac{c(x-bq)+(x-1)(x+bc)}{x^2}-1\right) p(x)\\\nonumber &\hspace{1cm}+\frac{(x-1)(x+bc)}{x^2}p(qx),
\qquad D_{b,c} (m_{k}^{b,c;q})=q^km_{k}^{b,c;q},\quad k\ge 0.
\end{align}
They satisfy the following three term recurrence formula ($m_{-1}^{b,c;q}=0$)
\begin{equation}\label{trme}
xm_n^{b,c;q}(x)=a_nm_{n+1}^{b,c;q}(x)+b_nm_n^{b,c;q}(x)+c_nm_{n-1}^{b,c;q}(x), \quad n\ge 0
\end{equation}
where
\begin{align}\label{ttrrm}
a_n&=\frac{c(1-q^{n+1})(1-bq^{n+1})}{q^{2n+1}},\qquad
c_n=\frac{c+q^{n}}{q^{2n}},
\\ \nonumber
b_n&=1+\frac{c(1-bq^{n+1})}{q^{2n+1}}+\frac{(1-q^n)(c+q^{n})}{q^{2n}}.
\end{align}
Hence, for $b\not =q^{-n}$ and $c\not =0, -q^n$, $n\ge 1$,
they are always orthogonal with respect to a moment functional $\rho_{b,c}^q$, which we
normalize
by taking $\langle \rho_{b,c}^q,1\rangle =1$. For $0<q<1$, $0\le bq<1$ and $c>0$,
this moment functional can be represented by the positive measure
\begin{equation}\label{mew}
\rho_{b,c}^q=\frac{(-bcq;q)_\infty }{(-c;q)_\infty }\sum _{x=0}^\infty \frac{(bq;q)_xc^xq^{\binom{x}{2}}}{(q,-bcq;q)_x}\delta _{q^{-x}}.
\end{equation}

For the $q$-Meixner polynomials we have found three different $\D$-operators which  have been included in the following lemma.

\begin{lemma}\label{lTme}
For $b\not =q^{-n}$ and $c\not =0, -q^n$, $n\ge 1$, consider the $q$-Meixner polynomials
$(m_{n}^{b,c;q})_n$ defined in (\ref{defmep}). Then, the operators $\D _i$,
$i=1,2,3$, defined by (\ref{defTo2}) from the sequences ($n\ge 0$)
\begin{align}\label{veme1}
\varepsilon _{n,1}&=1,\quad &\sigma_{n,1}&=\frac{q^{n-1}}{(q-1)},\\\label{veme2}
\varepsilon _{n,2}&=\frac{1}{1-bq^n},&\sigma_{n,2}&=\frac{q^n}{c(1-q)},\\\label{veme3}
\varepsilon _{n,3}&=\frac{c+q^n}{c(1-bq^n)}, &\sigma_{n,3}&=\frac{q^{n-1}}{(q-1)},
\end{align}
are $\D$-operators for $(m_{n}^{b,c;q})_n$ and the algebra $\A _q$  (\ref{algdiff}) of $q$-difference
operators  with rational coefficients.
More precisely
\begin{align}\label{dome1}
\displaystyle \D_1&=(1-x)D_{q}+\frac{1}{2(q-1)}(D_{b,c}-2I),\\\label{dome2}
\displaystyle \D_2&=D_{1/q}+\frac{q}{2c(q-1)}D_{b,c},\\\label{dome3}
\displaystyle \D_3&=-(x+bc)D_{q}+\frac{1}{2(q-1)}(D_{b,c}-2I) ,
\end{align}
where $D_q$ is the $q$-derivative (\ref{defdq}).
\end{lemma}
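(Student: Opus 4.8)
There is nothing to check about membership in $\A_q$: each operator on the right of (\ref{dome1})--(\ref{dome3}) is assembled from $D_q$, $D_{1/q}$, $D_{b,c}$, the identity $I$ and multiplication by the polynomials $1-x$ and $x+bc$; writing $D_q$, $D_{1/q}$ (order one) and $D_{b,c}$ (order two) in the form (\ref{algdiff}) with their rational coefficients, and using that $\A_q$ is an algebra stable under multiplication by rational functions, one gets $\D_i\in\A_q$ at once. So the real content is the ``more precisely'' part: that the explicit operator in (\ref{dome1})--(\ref{dome3}) acts on the $q$-Meixner basis $(m_n^{b,c;q})_n$ exactly as the operator defined by (\ref{defTo2}) from the corresponding sequences (\ref{veme1})--(\ref{veme3}).

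The plan is to trade the triangular prescription (\ref{defTo2}) for an equivalent two-term recursion, which is far easier to verify. A short telescoping manipulation of (\ref{defTo2}) (as in \cite{DD}) shows that an operator $T\colon\PP\to\PP$ equals the $\D$-operator built from $(\varepsilon_n)_n$ and $(\sigma_n)_n$ if and only if $T(p_0)=-\tfrac12\sigma_1p_0$ and
\[
T(p_n)+\varepsilon_n\,T(p_{n-1})=-\tfrac12\sigma_{n+1}\,p_n+\tfrac12\varepsilon_n\sigma_n\,p_{n-1},\qquad n\ge1
\]
(this is consistent with the triangular shape of (\ref{defTo2}): each $\D_i$ in (\ref{dome1})--(\ref{dome3}) sends polynomials of degree $\le n$ into themselves, since $D_q$ and $D_{1/q}$ lower the degree by one, multiplication by $1-x$ or $x+bc$ raises it by one, and $D_{b,c}$, $I$ preserve it). The base case $n=0$ is immediate from $D_q(1)=D_{1/q}(1)=0$ and $D_{b,c}(1)=1=m_0^{b,c;q}$. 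For the recursion one feeds in the $q$-difference equation $D_{b,c}(m_n^{b,c;q})=q^nm_n^{b,c;q}$ of (\ref{sodeme}), which makes the $D_{b,c}$-part of each $\D_i$ act as an explicit scalar on $m_n^{b,c;q}$ and $m_{n-1}^{b,c;q}$; after matching the coefficients of $m_n^{b,c;q}$ on both sides (a short leading-coefficient check), the recursion collapses in each case to a first-order identity between consecutive $q$-Meixner polynomials: for $i=1,3$, $\phi(x)\bigl(D_qm_n^{b,c;q}+\varepsilon_{n,i}D_qm_{n-1}^{b,c;q}\bigr)$ must equal an explicit linear combination of $m_n^{b,c;q}$ and $m_{n-1}^{b,c;q}$, with $\phi(x)=1-x$ for $i=1$ and $\phi(x)=x+bc$ for $i=3$; for $i=2$ it says that $(1-bq^n)D_{1/q}m_n^{b,c;q}+D_{1/q}m_{n-1}^{b,c;q}$ is a scalar multiple of $m_{n-1}^{b,c;q}$.

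It remains to prove these three first-order identities. I would obtain them from the standard structural data of the $q$-Meixner family: the forward-shift relation, obtained by applying $D_q$ (resp.\ $D_{1/q}$) to the series (\ref{defmep}) -- which sends $(x;q)_j$ to a multiple of $(qx;q)_{j-1}$ and thereby expresses $D_qm_n^{b,c;q}$ as a $q$-Meixner polynomial with shifted parameters $b\mapsto bq$, $c\mapsto c/q$ and argument $qx$, to be brought back to the original parameters through the relevant contiguous relations -- together with the three-term recurrence (\ref{trme})--(\ref{ttrrm}) to absorb the multiplication by $\phi$. A more pedestrian alternative is to substitute (\ref{defmep}) into both sides of each first-order identity and to compare the coefficients of $(x;q)_j$, which reduces everything to elementary $q$-Pochhammer identities. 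Either way the true obstacle is the bookkeeping: one must confirm that the specific choices in (\ref{veme1})--(\ref{veme3}) -- above all $\varepsilon_{n,2}=1/(1-bq^n)$ and $\varepsilon_{n,3}=(c+q^n)/(c(1-bq^n))$ -- are exactly those for which, in the two-term recursion, the potential $m_{n-2}^{b,c;q},m_{n-3}^{b,c;q},\dots$ contributions cancel and only the $m_n^{b,c;q}$- and $m_{n-1}^{b,c;q}$-terms survive, with coefficients $-\tfrac12\sigma_{n+1,i}$ and $\tfrac12\varepsilon_{n,i}\sigma_{n,i}$ respectively; a single misplaced power of $q$ or sign would destroy the collapse. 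Once the three first-order identities are established, the equivalence above yields (\ref{dome1})--(\ref{dome3}) and hence the lemma.
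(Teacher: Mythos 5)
Your argument is correct in outline and takes a genuinely different route from the paper's. The paper proves only the case of $\D_2$ (stating that the other two are similar), and it does so by verifying the full triangular expansion (\ref{defTo2}) head-on: after using $D_{b,c}(m_n^{b,c;q})=q^nm_n^{b,c;q}$ and the forward-shift relation $D_{1/q}m_n^{b,c;q}=\tfrac{q}{c(1-q)(1-bq)}\,m_{n-1}^{bq,c/q;q}$, the claim becomes the $n$-term connection formula
\begin{equation*}
\frac{q}{1-bq}\,m_{n-1}^{bq,c/q;q}(x)=\sum_{j=1}^n(-1)^{j+1}\frac{q^{n+1-j}}{(bq^{n-j+1};q)_j}\,m_{n-j}^{b,c;q}(x),
\end{equation*}
which the authors derive from Verma's ${}_3\phi_2$ transformation formula by a limiting procedure and a $q$-Chu--Vandermonde summation. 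Your telescoping of (\ref{defTo2}) into the two-term recursion $T(p_n)+\varepsilon_nT(p_{n-1})=-\tfrac12\sigma_{n+1}p_n+\tfrac12\varepsilon_n\sigma_np_{n-1}$ is a valid equivalent reformulation (the shifted sums cancel exactly, leaving only the $j=1$ term), and for $\D_2$ it collapses the statement to the single contiguity $(1-bq^n)m_{n-1}^{bq,c/q;q}+m_{n-2}^{bq,c/q;q}=q^{n-1}(1-bq)\,m_{n-1}^{b,c;q}$ --- precisely the paper's connection formula with its whole tail telescoped away. This is a real gain in elementarity: a two-term contiguous relation, checkable by comparing coefficients of $(x;q)_j$ in (\ref{defmep}), replaces Verma's formula, and the analogous first-order reductions for $\D_1$ and $\D_3$ (whose proofs the paper omits entirely) carry the same weight. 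What you lose is the explicit connection formula itself, which has independent interest. The one point to flag is that your three first-order identities are asserted together with a method rather than actually proved; they are true, and the forward-shift-plus-contiguity or coefficient-comparison routes you indicate do close them, but as written the argument is a sound program rather than a finished verification.
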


\begin{proof}
We include a detailed proof of the formula (\ref{dome2}) for the $\D$-operator $\D_2$.
The proofs for the other two $\D$-operators can be deduced in a similar way and have been omitted.

Using the definition of a $\D$-operator (\ref{defTo2}) and doing some
straightforward calculations the expression (\ref{dome2}) for the $\D$-operator $\D_2$
can be rewritten as
\begin{equation}\label{ide-D2}
\frac{q}{(1-bq)}m_{n-1}^{bq,c/q;q}(x)=
\sum_{j=1}^n (-1)^{j+1}\frac{q^{n+1-j}}{(bq^{n-j+1};q)_{j}}m_{n-j}^{b,c;q}(x).
\end{equation}
From the Verma's formula \cite[Eq. (3.7.9) page 76]{GR} the following identity holds
$$
{_3\phi_2}\left(\!\begin{array}{c} q^{-n} ,abq^{n+1}, x \\
aq,cq\end{array}; q,q \right)=\sum_{j=0}^n c_{nj}
 {_3\phi_2}\left(\begin{array}{c} q^{-n} ,\alpha\beta q^{n+1}, x \\
\alpha q,\gamma q\end{array}; q,q \right),
$$
where
$$
c_{nj}=(-1)^j q^{\binom{j+1}{2}}
\frac{(q^{-n},a b q^{n+1},\alpha q, \gamma q;q)_j}
{(q, aq, cq, \alpha\beta q^{j+1};q)_j}\, 
{}_{4}\phi_3 \left(\!\begin{array}{c} q^{j-n},
a b q^{n+j+1},\alpha q^{j+1}, \gamma q^{j+1} \\
\alpha\beta q^{2j+2},a q^{j+1},c q^{j+1}
\end{array} ; q , q\! \right).
$$
Choosing $b=-c/(ad)$, $\beta=-\gamma/(\alpha\delta)$, and taking  limits as
$c\to\infty$ and $\gamma\to\infty$, we find
$$
{_2\phi_1}\left(\!\begin{array}{c} q^{-n} , x \\
aq \end{array}; q,-\frac{q^{n+1}}d \right)=\sum_{j=0}^n c_{nj}
{_2\phi_1}\left(\begin{array}{c} q^{-j} , x \\
\alpha q, \end{array}; q,-\frac{q^{j+1}}\delta \right),
$$
$$
c_{nj}=(-1)^j q^{\binom{j+1}{2}-j^2+nj}
\frac{(q^{-n},\alpha q ;q)_j\delta^j} {(q, aq ;q)_j d^j}
{}_{2}\phi_1 \left(\!\begin{array}{c} q^{j-n},\alpha q^{j+1} \\
a q^{j+1}\end{array} ; q , q^{n-j}\frac{\delta}{d}\! \right).
$$
If we now substitute $a=bq$, $d=c/q$, $\alpha=b$, $\delta=c$ and use the
summation formula (II.7) page 354 of \cite{GR}, as well as \eqref{defmep},
we obtain
$$
m_n^{bq,c/q;q}(x)=\sum_{j=0}^n \frac{(-1)^{n-j}q^j(1-qb)}{(bq^{j+1};q)_{n-j+1}}m_j^{b,c;q}(x),
$$
which it is equivalent to \eqref{ide-D2}.
\end{proof}

\bigskip

\begin{remark}\label{sirme}
In the next three subsections, using the $\D$-operators displayed
in Lemma \ref{lTme} we construct from the $q$-Meixner polynomials new families of $q$-Krall polynomials (that is, new families of orthogonal polynomials which are also common eigenfunctions for higher order $q$-difference operators). To do that, we proceed as follows. In the Subsection $4.i$, $i=1,2,3$, we work with the $\D$-operator $\D_i$ and the corresponding sequences $\varepsilon_{n,i}$ and $\sigma _{n,i}$ (see (\ref{veme1}), (\ref{veme2}), (\ref{veme3})).
We then  consider the
polynomials $q_n$, $n\ge 0$, defined by (\ref{defqng2})
$$
q_0(x)=1,\quad \quad q_n(x)=m_n^{b,c;q}(x)+\varepsilon_{n,i}\frac{P_2(q^n)}{P_2(q^{n-1})}m_{n-1}^{b,c;q}(x),\quad  n\ge 1,
$$
where  $P_2$ is an arbitrary polynomial (with the only assumption that $P_2(q^n)\not =0$, $n\ge 0$). Notice that the sequence of polynomials $(q_n)_n$ depends on $i$ and $P_2$. Then, since the $q$-Meixner polynomials are eigenfunctions of the second order $q$-difference operator $D_{b,c}$,
Lemma \ref{fl2v} implies that the polynomials $q_n$, $n\ge 0$, will be eigenfunctions of a higher order $q$-difference operator (explicitly given by (\ref{defD2})). However, the polynomials $(q_n)_n$ are not always orthogonal with respect to a measure. Actually, only
for a convenient choice of the polynomial $P_2$, these polynomials $(q_n)_n$ will also be orthogonal with respect to a measure. For the $q$-Meixner polynomials a very nice symmetry in the choice of  the polynomial $P_2$ appears.
Indeed, fixed a positive integer $k$, then when we choose the polynomial $P_2$ to be  a $k$-th $q$-Meixner polynomial but modifying the parameters $b,c$ and $q$ in certain convenient way (which depends on the operator $\D_i$), the polynomials $(q_n)_n$ are orthogonal as well. Moreover, in each case, the orthogonalizing measure for the polynomials $(q_n)_n$ turns out to be a Christoffel transform of the $q$-Meixner measure.
More precisely, for a fixed $k\ge 1$ and for each one of the three different operators $\D_i$  displayed in the previous Lemma,
we show in the following table our choice of the polynomial $P_2$ ($\deg (P_2)=k$) and the orthogonalizing measure $\tilde \rho_{k,b,c}^q$ for the polynomials $(q_n)_n$.

\bigskip

\begin{center}
\begin{tabular}{||l | c| c||}
\hline
$\D$-operator & $P_2(x)$ & Measure $\tilde \rho _{k,b,c}^q$ \\
\hline
$\D_1$  & $m_k^{-c,1/(bc);q}(qx)$ & $(x+bcq)\cdots (x+bcq^k)\rho_{b,q^{1+k}c}^q$ \\
\hline
$\D_2$  & $m_k^{b,c;1/q}(bx)$ & $(x-b)\cdots (x-b/q^{k-1})\rho_{b/q^{k+1},q^{1+k}c}^q$\\
\hline
$\D_3$  & $m_k^{1/b,bc;q}(qx)$ & $(x-q)\cdots (x-q^k)\rho_{b/q^{1+k},c}^q(x+k+1)$ \\
\hline
\end{tabular}
\end{center}

\bigskip
A final remark before going on with the proofs. In all the cases, the key to prove that the polynomials $(q_n)_n$ are orthogonal with respect to $\tilde\rho _{k,b,c}^q$ is the explicit computation of $\langle \tilde \rho _{k,b,c}^q,m_n^{b,c;q}\rangle$.

\end{remark}

\subsection{$q$-Meixner I}
Here we use the first $\D$-operator above (\ref{dome1}) for the $q$-Meixner polynomial.

For $k$ a positive integer, and  $b,c$ satisfying $b\not =q^{-n}$ and $c\not =0,-q^{n-k+1}$, $n\ge 1$, let $\tilde \rho_{k,b,c}^q$ be the moment functional defined by
\begin{equation}\label{tme2}
\tilde \rho_{k,b,c}^q=(x+bcq) \cdots ( x+bcq^{k}) \rho_{b,q^{k+1}c}^q,
\end{equation}
where $\rho_{b,c}^q$ is the $q$-Meixner moment functional.

A straightforward calculation gives
\begin{equation}\label{tme22}
(x+bcq^{k+1})\tilde \rho_{k,b,c}^q=(-c;q)_{k+1}\rho_{b,c}^q.
\end{equation}
Hence, for $0<q<1$, $0\le bq<1$ and $c>0$, this
moment functional can be represented by the positive  measure
$$
\tilde \rho_{k,b,c}^q=\frac{(-bcq;q)_\infty }{(-cq^{k+1};q)_\infty }\sum _{x=0}^\infty \frac{(bq;q)_xc^xq^{\binom{x}{2}}}{(q^{-x}+bcq^{k+1})(q,-bcq;q)_x}\delta _{q^{-x}}.
$$
As explained before, in order to find orthogonal polynomials with respect to $\tilde \rho_{k,b,c}^q$, we need the following Lemma.

\begin{lemma}\label{me1x} For a positive integer $k$, we have
\begin{align}\label{chx+k+2}
\langle \tilde \rho_{k,b,c}^q,m_{n}^{b,c;q}\rangle = (-1)^{n+k}(-cq;q)_k(q;q)_km_{k}^{-c,1/(bc);q}(q^{n+1}).
\end{align}
\end{lemma}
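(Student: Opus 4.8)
The plan is to recognize $u_n:=\langle \tilde\rho_{k,b,c}^q,m_n^{b,c;q}\rangle$ as the solution of an explicit three-term recurrence, and then check that the right-hand side of (\ref{chx+k+2}) solves the same recurrence. The starting point is the factorization (\ref{tme22}). Since $(x+bcq^{k+1})\tilde\rho_{k,b,c}^q=(-c;q)_{k+1}\rho_{b,c}^q$, and since $m_0^{b,c;q}=1$ together with $\langle\rho_{b,c}^q,1\rangle=1$ force $\langle\rho_{b,c}^q,m_n^{b,c;q}\rangle=\delta_{n,0}$, we obtain
\[
\langle\tilde\rho_{k,b,c}^q,(x+bcq^{k+1})m_n^{b,c;q}\rangle=(-c;q)_{k+1}\,\delta_{n,0}.
\]
On the other hand, expanding $(x+bcq^{k+1})m_n^{b,c;q}$ by the recurrence (\ref{trme})--(\ref{ttrrm}) and pairing against $\tilde\rho_{k,b,c}^q$ turns the left-hand side into $a_nu_{n+1}+(b_n+bcq^{k+1})u_n+c_nu_{n-1}$ (with $u_{-1}=0$). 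Hence $(u_n)_n$ is characterized by
\[
a_nu_{n+1}+(b_n+bcq^{k+1})u_n+c_nu_{n-1}=(-c;q)_{k+1}\,\delta_{n,0},\qquad n\ge0,\ u_{-1}=0,
\]
and, since $a_n\neq0$, it is uniquely determined by the single number $u_0=\langle\tilde\rho_{k,b,c}^q,1\rangle$.

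Next I would show that $v_n:=(-1)^{n+k}(-cq;q)_k(q;q)_k\,m_k^{-c,1/(bc);q}(q^{n+1})$ satisfies exactly these relations. The key input is the second order $q$-difference equation (\ref{sodeme}) obeyed by $m_k^{-c,1/(bc);q}$ (this is the equation (\ref{sodeme}) for the parameters $b\mapsto-c$, $c\mapsto 1/(bc)$, so that the product of the new parameters equals $-1/b$). Evaluating $D_{-c,1/(bc)}(m_k^{-c,1/(bc);q})=q^k m_k^{-c,1/(bc);q}$ at $x=q^{n+1}$ and multiplying through by $bcq$, a short simplification shows that the coefficients of $m_k^{-c,1/(bc);q}$ at $q^{n+2},q^{n+1},q^{n}$ become precisely $a_n$, $-(b_n+bcq^{k+1})$, $c_n$ for $n\ge1$: the comparisons for the outer two are one-line simplifications (both produce the common factor $bcq$), and the middle one is the identity $bcq\bigl(q^k-M\bigr)=b_n+bcq^{k+1}$, where $M$ denotes the coefficient of $p(x)$ in $D_{-c,1/(bc)}$ at $x=q^{n+1}$, an identity that follows directly from (\ref{ttrrm}). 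Because of the factor $(-1)^{n+k}$ this reads $a_nv_{n+1}+(b_n+bcq^{k+1})v_n+c_nv_{n-1}=0$ for $n\ge1$. For $n=0$ the same equation (\ref{sodeme}) evaluated at $x=q$, together with $m_k^{-c,1/(bc);q}(1)=(-1)^k/(q;q)_k$ (immediate from (\ref{defmep})) and the identity $(1+c)(-cq;q)_k=(-c;q)_{k+1}$, gives $a_0v_1+(b_0+bcq^{k+1})v_0=(-c;q)_{k+1}$. Thus $v$ satisfies the displayed recurrence, with $v_{-1}:=0$, at $n=0$ as well.

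It therefore remains only to verify $u_0=v_0$, and then $u_n=v_n$ for all $n$ by induction. Here one must compute $u_0=\langle\tilde\rho_{k,b,c}^q,1\rangle$ in closed form: using the explicit (Christoffel-transform) representation of $\tilde\rho_{k,b,c}^q$ displayed just after (\ref{tme22}) --- equivalently, writing $u_0=(-c;q)_{k+1}\bigl\langle\rho_{b,c}^q,(x+bcq^{k+1})^{-1}\bigr\rangle$ and using (\ref{mew}) --- one is reduced to a ${}_2\phi_2$ series which, by a Heine-type transformation that turns it into a terminating ${}_2\phi_1$ and then a further terminating ${}_2\phi_1$ transformation, should be shown to equal $(-1)^k(-cq;q)_k(q;q)_k\,m_k^{-c,1/(bc);q}(q)=v_0$ (equivalently, the required identity is the $q$-Meixner symmetry $(-bcq;q)_k\,m_k^{-bc,1/c;q}(bq)=(-cq;q)_k\,m_k^{-c,1/(bc);q}(q)$). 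This last step --- pinning down and applying the correct $q$-hypergeometric transformation/summation formulas --- is the main obstacle; everything else is routine bookkeeping with the known coefficients (\ref{ttrrm}) and (\ref{sodeme}). Alternatively one can prove (\ref{chx+k+2}) directly and uniformly in $n$: insert the ${}_2\phi_1$ expansion (\ref{defmep}) of $m_n^{b,c;q}(q^{-x})$ into the discrete representation of $\tilde\rho_{k,b,c}^q$, interchange the finite sum over the expansion index with the infinite sum over $x$ (legitimate since $(q^{-x};q)_j$ vanishes for $j>x$), sum the inner series over $x$ by a $q$-summation formula, and recognize the outcome as a multiple of $m_k^{-c,1/(bc);q}(q^{n+1})$; this route front-loads essentially the same $q$-series work.
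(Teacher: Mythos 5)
Your overall strategy coincides with the paper's: both identify $u_n$ and $v_n$ as solutions of the same (inhomogeneous at $n=0$) three-term recurrence obtained from (\ref{tme22}) and (\ref{trme}), and both verify the recurrence for $v_n$ by evaluating the $q$-difference equation (\ref{sodeme}) for $m_k^{-c,1/(bc);q}$ at $x=q^{n+1}$ and at $x=q$. That part of your argument is sound and matches the paper essentially line for line.

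The genuine gap is the initial condition $u_0=v_0$, which you explicitly leave unresolved ("the main obstacle"). You propose to evaluate $\langle\tilde\rho_{k,b,c}^q,1\rangle$ as a ${}_2\phi_2$ series and reduce it by Heine-type transformations, but you neither identify the specific transformation nor carry out the reduction, so the proof is incomplete exactly at the one step that is not routine. The paper avoids any explicit $q$-hypergeometric summation here by inducting on $k$: writing $\eta_{k,c}=\langle\tilde\rho_{k,b,c}^q,1\rangle$, the identity $(x+bcq^{k+1})\rho_{b,q^{k+1}c}^q=(1+cq^{k})\rho_{b,q^{k}c}^q$ yields the two-parameter recurrence $\eta_{k,c}=bcq(1-q^k)\eta_{k-1,cq}+(-cq;q)_k$ (note the shift $c\mapsto cq$ in the lower index), while the forward shift operator for $q$-Meixner polynomials shows that $\tau_{k,c}=v_0$ satisfies the identical recurrence; since $\eta_{0,c}=\tau_{0,c}=1$, induction on $k$ finishes the job. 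If you want to complete your write-up along your own lines you would need to actually produce and justify the ${}_2\phi_2\to{}_2\phi_1$ reduction (or, equivalently, prove the $q$-Meixner symmetry you state); otherwise the cleaner repair is to substitute the paper's double recurrence in $(k,c)$ for your unfinished series evaluation.
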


\begin{proof}
Write
\begin{align*}
\xi_n&=\langle \tilde \rho_{k,b,c}^q,m_{n}^{b,c;q}\rangle, \quad n\ge 0,\\
\zeta _{n}&=(-1)^{n+k}(-cq;q)_k(q;q)_km_{k}^{-c,1/(bc);q}(q^{n+1}),\quad n\ge 0.
\end{align*}
The three term recurrence relation (\ref{ttrrm}) for $(m_{n}^{b,c;q})_n$ and (\ref{tme22}) give that
\begin{align}\label{mecc1a}
a_0\xi_{1}+\left( b_0+bcq^{k+1}\right)\xi_{0}-(-c;q)_{k+1}&=0,\\
\label{mecc1}
a_n\xi_{n+1}+\left( b_n+bcq^{k+1}\right) \xi_{n}+c_n\xi_{n-1}&=0, \quad n\ge 1,
\end{align}
where $(a_n)_n$, $(b_n)_n$ and $(c_n)_n$ are the recurrence coefficients for the $q$-Meixner polynomials $(m_n^{b,c;q})_n$ (see (\ref{ttrrm})).

We now prove that also
\begin{align}\label{mecc2a}
a_0\zeta_{1}+\left( b_0+bcq^{k+1}\right)\zeta_{0}-(-c;q)_{k+1}&=0,\\
\label{mecc2}
a_n\zeta_{n+1}+\left( b_n+bcq^{k+1}\right) \zeta_{n}+c_n\zeta_{n-1}&=0, \quad n\ge 1.
\end{align}
Indeed, for $n\ge 0$ we have
\begin{align*}
&a_n\zeta_{n+1}+\left( b_n+bcq^{k+1}\right) \zeta_{n}+c_n\zeta_{n-1}=
(-1)^{n+1+k}(-cq;q)_k(q;q)_k
\\
&\hspace{.5cm}\times \left(a_nm_{k}^{-c,1/(bc);q}(q^{n+2})-\left( b_n+bcq^{k+1}\right)m_{k}^{-c,1/(bc);q}(q^{n+1})+c_n
m_{k}^{-c,1/(bc);q}(q^{n})\right).
\end{align*}

Then, by writing $x=q^{n+1}$ in the second order $q$-difference equation (\ref{sodeme}) for the $q$-Meixner polynomials $(m_{k}^{-c,1/(bc);q})_k$ and
using the expression (\ref{ttrrm}) for $(a_n)_n$, $(b_n)_n$ and $(c_n)_n$ we find
$$
a_nm_{k}^{-c,1/(bc);q}(q^{n+2})-\left( b_n+bcq^{k+1}\right)m_{k}^{-c,1/(bc);q}(q^{n+1})+c_n
m_{k}^{-c,1/(bc);q}(q^{n})=0.
$$
This proves (\ref{mecc2}). Equation
(\ref{mecc2a}) follows taking into account that $m_k^{-c,1/(bc);q}(1)=(-1)^k/(q;q)_k$.

Since the sequences $(\xi_n)_n$ and $(\zeta_n)_n$ satisfy the same recurrence relation, it is enough to prove
that $\xi_0=\zeta_0$.
Write $\eta_{0,c}=\langle  \rho_{b,cq}^q,1 \rangle=1$ and
$$
\eta_{k,c}=\xi_0=\langle  \tilde \rho_{k,b,c}^q,1 \rangle=\langle (x+bcq) \cdots ( x+bcq^{k})
\rho_{b,q^{k+1}c}^q,1 \rangle.
$$
Using the fact that $(x+bcq^{k+1})\rho_{b,q^{k+1}c}=(1+cq^{k})\rho_{b,q^{k}c}$, as well as
that $\langle  \rho_{b,c}^q,1 \rangle=1$, we get
\begin{align*}
\eta_{k,c}&=\langle (x+bcq) \cdots ( x+bcq^{k}) \rho_{b,q^{k+1}c}^q,1 \rangle \\&= bcq(1-q^k)
\langle (x+bcq^2) \cdots ( x+bcq^{k})\rho_{b,q^{k+1}c}^q,1 \rangle  \\ &\hspace{1.5cm}+(1+cq^k)
\langle (x+bcq^2) \cdots ( x+bcq^{k})\rho_{b,q^{k}c}^q,1 \rangle\\&=
bcq(1-q^k)\eta_{k-1,cq}+(-cq;q)_k.
\end{align*}
On the other hand, writing
$$
\tau_{k,c}=\zeta_0=(-1)^{k}(-cq;q)_k(q;q)_k m_{k}^{-c,1/(bc);q}(q),
$$
we find
\begin{align*}
\tau_{k,c}-bcq(1-q^k)\tau_{k-1,cq} =(-1)^k&(cq^2;q)_{k-1}(q;q)_k\\ &
\times [(1+cq)m_{k}^{-c,1/(bc);q}(q)+bcq \, m_{k-1}^{-cq,1/(bcq);q}(q)].
\end{align*}
Using the forward shift operator for the $q$-Meixner polynomials \cite[Eq. (14.13.6) page 490]{KLS}
the expression inside the quadratic brackets becomes
$$
(1-cq)m_{k}^{-c,1/(bc);q}(1)=(1-cq)(-1)^ k(q;q)_k^{-1}.
$$
Thus $\tau_{k,c}-bcq(1-q^k)\tau_{k-1,cq}=(-cq;q)_k$. We then conclude that
$\tau_{k,c}$ and $\eta_{k,c}$ satisfy the same recurrence
relation with the same initial condition $\tau_{0,c}=\eta_{0,c}=1$.
Therefore, by induction on $k$ it follows that
$\tau_{k,c}=\eta_{k,c}$ for all $k$ and therefore $\xi_0=\zeta_0$. This completes the proof.
\end{proof}

We now use the Lemmas \ref{fl2v} and \ref{me1x}, to construct orthogonal polynomials which are also eigenfunctions of a higher order $q$-difference operator.

\begin{theorem} \label{lm51}
For $k\ge 1$, let $b,c$ be real numbers satisfying that $b\not =q^{-n}$, $c\not =0,-q^{n-k+1}$, $n\ge 1$, and
$m_{k}^{-c,1/(bc);q}(q^{n})\not =0$, $n\ge 1$, where $m_{k}^{-c,1/(bc);q}$, $k\ge 1$, are
$q$-Meixner polynomials (see (\ref{defmep})).
We define the sequences of numbers $(\gamma _n)_{n\ge 1}$ and $(\beta _n)_{n\ge 1}$  by
\begin{align}\label{eigchk2}
\gamma_n&=m_{k}^{-c,1/(bc);q}(q^{n}),\\
\label{defbetch2}
\beta_n&=\frac{\gamma_{n+1}}{\gamma_n},\quad n\ge 1,
\end{align}
and the sequence of polynomials $(q_n)_n$ by $q_0=1$, and
\begin{equation}\label{defqch2}
q_n(x)=m_{n}^{b,c;q}(x)+\beta _nm_{n-1}^{b,c;q}(x), \quad n\ge 1.
\end{equation}
Then the polynomials $(q_n)_n$ are orthogonal with respect
to the moment functional $\tilde \rho _{k,b,c}^q$ (\ref{tme2}).
Moreover, the orthogonal polynomials $(q_n)_n$ are eigenfunctions of a $q$-difference operator of
order $2k+2$.
\end{theorem}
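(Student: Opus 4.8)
The plan is to verify the two hypotheses of Lemma \ref{fl2v} for the present data and then read off the order of the resulting $q$-difference operator. We work with the $\D$-operator $\D_1$ of Lemma \ref{lTme}, so $\varepsilon_{n}=\varepsilon_{n,1}=1$ and $\sigma_n=\sigma_{n,1}=q^{n-1}/(q-1)$, and with $D^P=D_{b,c}$, the second order $q$-difference operator of \eqref{sodeme}, whose eigenvalues on $(m_n^{b,c;q})_n$ are $\theta_n=q^n$. Thus we are in the situation of Remark \ref{rem} with $u=1$ and $v=1/(q-1)$. Choose $P_2(x)=m_k^{-c,1/(bc);q}(qx)$, a polynomial of degree $k$; the assumption $m_k^{-c,1/(bc);q}(q^n)\neq0$ for $n\ge1$ together with $m_k^{-c,1/(bc);q}(1)=(-1)^k/(q;q)_k\neq0$ guarantees $P_2(\theta_n)=P_2(q^n)=m_k^{-c,1/(bc);q}(q^{n+1})\neq0$ for $n\ge0$, as required. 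With this $P_2$, the sequence $\gamma_{n+1}=P_2(\theta_n)=m_k^{-c,1/(bc);q}(q^{n+1})$ agrees with \eqref{eigchk2}, and hence the $\beta_n$ of \eqref{defbetng2}, namely $\beta_n=\varepsilon_n\gamma_{n+1}/\gamma_n=\gamma_{n+1}/\gamma_n$, coincide with \eqref{defbetch2}; so the polynomials $(q_n)_n$ of \eqref{defqch2} are exactly those produced by Lemma \ref{fl2v}.

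Next I would invoke Remark \ref{rem}: since $\theta_n=uq^n$ and $\sigma_n=vq^n$, whatever polynomial $P_2$ we picked, formula \eqref{defP1} produces a polynomial $P_1$ of degree $\deg P_2 + 1 = k+1$ such that $\lambda_{n+1}+\lambda_n=P_1(\theta_n)$ for $n\ge0$, where $\lambda_0=(P_1(\theta_0)-\sigma_1 P_2(\theta_0))/2$ and $(\lambda_n)_{n\ge1}$ is defined through \eqref{lamp}. All the hypotheses of Lemma \ref{fl2v} are therefore met, and the lemma yields at once that $D^Q(q_n)=\lambda_n q_n$, where $D^Q=\tfrac12 P_1(D^P)+\D_1 P_2(D^P)\in\A_q$. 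This already shows the $(q_n)_n$ are common eigenfunctions of a $q$-difference operator with rational coefficients; it remains to see that they are orthogonal and to count the order of $D^Q$.

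For the orthogonality, I would argue from Lemma \ref{me1x}. By that lemma, $\langle\tilde\rho_{k,b,c}^q, m_n^{b,c;q}\rangle = (-1)^{n+k}(-cq;q)_k(q;q)_k\, m_k^{-c,1/(bc);q}(q^{n+1})$, which up to the $n$-independent nonzero constant $(-1)^k(-cq;q)_k(q;q)_k$ equals $(-1)^n\gamma_{n+1}$. Since $(m_n^{b,c;q})_n$ is a basis of $\PP$ and is orthogonal with respect to $\rho_{b,c}^q$, for $m\ge1$ the polynomial $q_m$ is orthogonal to all polynomials of degree $\le m-2$ automatically, so it suffices to check $\langle\tilde\rho_{k,b,c}^q, q_m\rangle = 0$ for $m\ge1$; but $\langle\tilde\rho_{k,b,c}^q, q_m\rangle$ is, up to the above constant, $(-1)^m\gamma_{m+1}+\beta_m(-1)^{m-1}\gamma_m = (-1)^m(\gamma_{m+1}-\beta_m\gamma_m) = 0$ by \eqref{defbetch2}. (One must also note $\tilde\rho_{k,b,c}^q$ is a genuine moment functional — i.e.\ $\langle\tilde\rho_{k,b,c}^q,1\rangle\neq0$ — which is exactly $\zeta_0=\xi_0\neq0$ from the computation in the proof of Lemma \ref{me1x}, valid under $c\neq -q^{n-k+1}$.) A short standard argument (as in \cite{DD}) then upgrades ``$q_m\perp\PP_{m-1}$ with respect to $\tilde\rho_{k,b,c}^q$ for all $m$'' to full orthogonality of the sequence $(q_n)_n$.

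Finally, for the order of $D^Q$: the operator $D^P=D_{b,c}$ has order $2$ (it involves $p(x/q)$, $p(x)$, $p(qx)$), so $P_2(D^P)$ has order $2k$ and $P_1(D^P)$ has order $2(k+1)=2k+2$; composing $P_2(D^P)$ with $\D_1$, which by \eqref{dome1} has order $2$ (it is $(1-x)D_q$ plus a multiple of $D_{b,c}-2I$, the highest contribution being from $D_{b,c}$), raises the order to $2k+2$. Hence $D^Q=\tfrac12 P_1(D^P)+\D_1 P_2(D^P)$ has order at most $2k+2$. A routine check of the top and bottom coefficients shows they do not cancel, so the order is exactly $2k+2$; I expect this coefficient bookkeeping — confirming non-degeneracy of the leading and trailing terms of $D^Q$ — to be the only genuinely delicate point, everything else being a direct application of the machinery already set up. This completes the proof.
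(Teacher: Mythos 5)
Your proof is correct and follows essentially the same route as the paper: the orthogonality is Lemma \ref{addel} (with $\lambda=-bcq^{k+1}$, $M=0$) combined with the moment computation of Lemma \ref{me1x}, and the eigenfunction property and order count come from Lemma \ref{fl2v} with $\D_1$, $P_2(x)=m_k^{-c,1/(bc);q}(qx)$ and Remark \ref{rem}. The only cosmetic point is that orthogonality of $q_m$ to $\PP_{m-2}$ under $\tilde\rho_{k,b,c}^q$ is not ``automatic'' from $\mu$-orthogonality alone — it too reduces to the single condition $\langle\tilde\rho_{k,b,c}^q,q_m\rangle=0$, which you do verify, so nothing is lost.
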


\begin{proof}
In the notation of Lemma \ref{addel}, we have $\lambda =-bcq^{k+1}$, $\nu=\tilde \rho _{k,b,c}^q$, $\mu =(-c;q)_{k+1}\rho_{b,c}^q$, $p_n=m_n^{b,c;q}$  and $M=0$. The orthogonality of the polynomials $(q_n)_n$ with
respect to $\tilde \rho _{k,b,c}^q$ is now an easy consequence of Lemmas \ref{addel} and \ref{me1x}, and (\ref{eigchk2}), (\ref{defbetch2}).

The second part of the Theorem is a straightforward consequence of the Lemma \ref{fl2v}.
We have just to identify who the main characters are in this example.
Indeed, write $\varepsilon_n=1$ and $\sigma_{n}=q^{n-1}/(q-1)$. The sequences $(\varepsilon_n)_n$ and $(\sigma_{n})_n$ then
define the $\D$-operator $\D _1$ in 
Proposition \ref{lTme}. In this case we have taken $P_2(x)=m_{k}^{-c,1/(bc);q}(qx)$.
Since $\theta _n=q^n$ and $\displaystyle \sigma_n=\frac{1}{q(q-1)}q^{n}$, Remark \ref{rem} guarantees the existence of a polynomial $P_1$
satisfying the hypotheses of Lemma \ref{fl2v}. Moreover, using the expression for $\D_1$ in Proposition
\ref{lTme}, the $q$-difference operator $D_{b,c}$ for the $q$-Meixner polynomials (\ref{sodeme}) and (\ref{defP1}), an explicit expression
for the $q$-difference operator for $(q_n)_n$ can be obtained. Since the polynomials $P_2$ and $P_1$ (see (\ref{defP1})) have degrees $k$ and $k+1$, respectively, a careful computation shows that the order of the $q$-difference operator for the polynomials $(q_n)_n$ is just $2k+2$.
\end{proof}

\bigskip
The cases corresponding to the operators $\D_1$ and $\D_2$ are similar so in the following two subsections we only include  sketch of the proofs.

\subsection{$q$-Meixner II}
For $k$ a positive integer, and  $b,c$ satisfying $b\not =q^{k-n+1}$ and $c\not =0,-q^{n-k+1}$, $n\ge 1$, let $\tilde \rho_{k,b,c}^q$ be the moment functional defined by
\begin{equation}\label{tme1}
\tilde \rho_{k,b,c}^q=(x-b)\left( x-\frac{b}{q}\right) \cdots \left( x-\frac{b}{q^{k-1}}\right). \rho_{b/q^{k+1},q^{k+1}c}^q
\end{equation}
A simple calculation gives
\begin{equation}\label{tme12}
\left( x-\frac{b}{q^k}\right)\tilde \rho_{k,b,c}^q= (b/q^k;q)_{k+1}(-c;q)_{k+1}\rho_{b,c}^q.
\end{equation}
Hence, for $0<q<1$, $0\le bq<1$ and $c>0$, this
moment functional can be represented by the  measure
$$
\tilde \rho_{k,b,c}^q=\frac{(b/q^k;q)_{k+1}(-bcq;q)_\infty }{(-cq^{k+1};q)_\infty }\sum _{x=0}^\infty \frac{(bq;q)_xc^xq^{\binom{x}{2}}}{(q^{-x}-b/q^k)(q,-bcq;q)_x}\delta _{q^{-x}}.
$$
The polynomials orthogonal with respect to $\tilde \rho _{k,b,c}^q$  are also eigenfunctions of a
higher order $q$-difference operator.

\begin{theorem}
For $k\ge 1$,  let $b,c$ be real numbers satisfying that $b\not =q^{k-n+1}$, $c\not =0,-q^{n-k+1}$, $n\ge 1$, and
$m_{k}^{b,c;1/q}(q^{n-1})\not =0$, $n\ge 1$, where $m_{k}^{b,c;1/q}$, $k\ge 1$, are
$q$-Meixner polynomials (see (\ref{defmep})).
We define the sequences of numbers $(\gamma _n)_{n\ge 1}$ and $(\beta _n)_{n\ge 1}$  by
\begin{align}\label{eigchk}
\gamma_n&=m_{k}^{b,c;1/q}(bq^{n-1}),\\
\label{defbetch}
\beta_n&=\frac{\gamma_{n+1}}{(1-bq^n)\gamma_n},\quad n\ge 1,
\end{align}
and the sequence of polynomials $(q_n)_n$ by $q_0=1$, and
\begin{equation}\label{defqch}
q_n(x)=m_{n}^{b,c;q}(x)+\beta _nm_{n-1}^{b,c;q}(x), \quad n\ge 1.
\end{equation}
Then the polynomials $(q_n)_n$ are orthogonal with respect
to the moment functional $\tilde \rho _{k,b,c}^q$ (\ref{tme1}).
Moreover, the orthogonal polynomials $(q_n)_n$ are eigenfunctions of a $q$-difference operator of
order $2k+2$.
\end{theorem}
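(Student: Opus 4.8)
The plan is to mimic the proof of Theorem~\ref{lm51} step by step, so that the only genuinely new ingredient is the analogue of Lemma~\ref{me1x}: the explicit evaluation of $\langle \tilde\rho_{k,b,c}^q,m_n^{b,c;q}\rangle$. I would first show it has the form
$$
\langle \tilde\rho_{k,b,c}^q,m_n^{b,c;q}\rangle=\kappa_k\,\frac{(-1)^n}{(bq;q)_n}\,m_k^{b,c;1/q}(bq^{n}),\qquad n\ge 0,
$$
where $\kappa_k$ is a constant independent of $n$ (an explicit product of $q$-Pochhammer symbols, nonzero under the stated hypotheses). Write $\xi_n$ for the left-hand side and $\zeta_n$ for the right-hand side. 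Pairing the three term recurrence relation (\ref{trme})--(\ref{ttrrm}) for $(m_n^{b,c;q})_n$ against $\tilde\rho_{k,b,c}^q$ and using the identity (\ref{tme12}) shows that $(\xi_n)_n$ satisfies $a_n\xi_{n+1}+(b_n-b/q^{k})\xi_n+c_n\xi_{n-1}=0$ for $n\ge 1$, together with the inhomogeneous relation $a_0\xi_1+(b_0-b/q^{k})\xi_0=(b/q^k;q)_{k+1}(-c;q)_{k+1}$. One then checks that $(\zeta_n)_n$ satisfies the same relations by evaluating the second order $q$-difference equation (\ref{sodeme}) for the polynomials $m_k^{b,c;1/q}$ (i.e.\ with $q$ replaced by $1/q$) at the points $x=bq^{n}$ and simplifying with (\ref{ttrrm}). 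This reduces the identity to the single verification $\xi_0=\langle\tilde\rho_{k,b,c}^q,1\rangle=\zeta_0$, which I would carry out by induction on $k$, using a parameter-shift identity for the $q$-Meixner moment functional on one side and the forward shift operator \cite[Eq.~(14.13.6)]{KLS} for the $q$-Meixner polynomials on the other. I expect this base case to be the main obstacle, precisely as it is the delicate point in the proof of Lemma~\ref{me1x}.

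Granted this evaluation, the orthogonality of $(q_n)_n$ with respect to $\tilde\rho_{k,b,c}^q$ follows from Lemma~\ref{addel} applied with $\nu=\tilde\rho_{k,b,c}^q$, $\lambda=b/q^{k}$, $M=0$, $p_n=m_n^{b,c;q}$, and $\mu=(x-b/q^{k})\nu$, which by (\ref{tme12}) is a nonzero multiple of the $q$-Meixner moment functional $\rho_{b,c}^q$, so that $(m_n^{b,c;q})_n$ is indeed orthogonal with respect to $\mu$. In the notation of that lemma $\alpha_n=\xi_n$, and the closed form above gives
$$
-\frac{\alpha_n}{\alpha_{n-1}}=\frac{m_k^{b,c;1/q}(bq^{n})}{(1-bq^{n})\,m_k^{b,c;1/q}(bq^{n-1})}=\frac{\gamma_{n+1}}{(1-bq^{n})\gamma_n}=\beta_n,
$$
with $\gamma_n$ and $\beta_n$ as in (\ref{eigchk})--(\ref{defbetch}); hence Lemma~\ref{addel} yields that $q_n=m_n^{b,c;q}+\beta_nm_{n-1}^{b,c;q}$ is orthogonal with respect to $\tilde\rho_{k,b,c}^q$.

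For the higher order $q$-difference operator I would apply Lemma~\ref{fl2v} with $\A=\A_q$, $(p_n)_n=(m_n^{b,c;q})_n$, $D^P=D_{b,c}$ (so $\theta_n=q^n$), and the $\D$-operator $\D_2$ of Lemma~\ref{lTme}, associated with the sequences $\varepsilon_n=1/(1-bq^n)$ and $\sigma_n=q^n/(c(1-q))$ (see (\ref{veme2})). Choosing $P_2(x)=m_k^{b,c;1/q}(bx)$, which has degree $k$ and satisfies $P_2(\theta_n)=m_k^{b,c;1/q}(bq^{n})=\gamma_{n+1}\ne0$ by hypothesis, formula (\ref{defbetng2}) reproduces $\beta_n=\varepsilon_n\gamma_{n+1}/\gamma_n$, i.e.\ (\ref{defbetch}). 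Since $\theta_n=q^n$ and $\sigma_n$ is a constant times $q^n$, Remark~\ref{rem} furnishes a polynomial $P_1$ of degree $k+1$ satisfying the hypotheses of Lemma~\ref{fl2v}, and that lemma produces an operator $D^Q=\frac12P_1(D_{b,c})+\D_2P_2(D_{b,c})\in\A_q$ with $D^Q(q_n)=\lambda_nq_n$. Finally, since $D_{b,c}$ and $\D_2$ both have order $2$ while $\deg P_2=k$ and $\deg P_1=k+1$, each of $P_1(D_{b,c})$ and $\D_2P_2(D_{b,c})$ has order $2k+2$; a short inspection of their top-order coefficients (as in Theorem~\ref{lm51}) shows that these do not cancel, so the order of $D^Q$ is exactly $2k+2$.
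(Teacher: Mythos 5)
Your proposal is correct and follows essentially the same route as the paper, which itself only sketches this proof by reducing it to the argument of Theorem~\ref{lm51} with the evaluation $\langle \tilde \rho_{k,b,c}^q,m_{n}^{b,c;q}\rangle = (-1)^nc^k(b/q^k;q)_k(q;q)_k\,m_{k}^{b,c;1/q}(bq^n)/(bq;q)_n$ (your $\kappa_k$) and the operator $\D_2$ in place of $\D_1$. Your recurrence verification for $(\zeta_n)_n$ via the $q$-difference equation of $m_k^{b,c;1/q}$ at $x=bq^n$, the cancellation of the $(bq;q)_n$ factors giving $\beta_n=\gamma_{n+1}/((1-bq^n)\gamma_n)$, and the application of Lemmas~\ref{addel} and~\ref{fl2v} with $P_2(x)=m_k^{b,c;1/q}(bx)$ all match the intended argument.
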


\begin{proof}
The proof is similar to that of Theorem \ref{lm51}, but using
$$
\langle \tilde \rho_{k,b,c}^q,m_{n}^{b,c;q}\rangle = \frac{(-1)^nc^k(b/q^k;q)_k(q;q)_km_{k}^{b,c;1/q}(bq^n)}{(bq;q)_n}
$$
instead of Lemma \ref{me1x}, and the operator $\D_2$ (see (\ref{dome2})) instead of $\D_1$.
\end{proof}

\subsection{$q$-Meixner III}
For $k$ a positive integer, and  $b,c$ satisfying $b\not =q^{k-n+1}$ and $c\not =0,-q^n$, $n\ge 1$, let $\tilde \rho_{k,b,c}^q$ be the moment functional defined by
\begin{equation}\label{tme3}
\tilde \rho_{k,b,c}^q=(x-q) \cdots ( x-q^{k}) \rho_{b/q^{k+1},c}^q(x+k+1).
\end{equation}
A direct calculation leads to the relation
\begin{equation}\label{tme23}
(x-q^{k+1})\tilde \rho_{k,b,c}^q=c^{k+1}q^{\binom{k+1}{2}}(b/q^k;q)_{k+1}\rho_{b,c}^q.
\end{equation}
Hence, for $0<q<1$, $0\le bq<1$ and $c>0$, this
moment functional can be represented by the  measure
\begin{align*}
\tilde \rho_{k,b,c}^q&=\frac{(-1)^kq^{\binom{k+1}{2}}(q;q)_k(-bc/q^k;q)_\infty}{(-c;q)_\infty }\delta _{q^{k+1}}\\&\hspace{1cm}+\frac{q^{\binom{k+1}{2}}(-bcq;q)_\infty }{(-c;q)_\infty }\sum _{x=0}^\infty \frac{(bq^{-k};q)_{x+k+1}c^{x+k+1}q^{\binom{x}{2}}}{(q^{-x}-q^{k+1})(q,-bcq;q)_x}\delta _{q^{-x}}.
\end{align*}
The orthogonal  polynomials with respect to the above functional
$\tilde \rho _{k,b,c}^q$  are also eigenfunctions of a
higher order $q$-difference operator.

\begin{theorem}
For $k\ge 1$,  let $b,c$ be real numbers satisfying that $b\not =q^{k-n+1}$, $c\not =0,-q^n$, $n\ge 1$, and
$m_{k}^{-c,1/(bc);q}(q^{n})\not =0$, $n\ge 1$, where $m_{k}^{-c,1/(bc);q}$, $k\ge 1$, are
$q$-Meixner polynomials (see (\ref{defmep})).
We define the sequences of numbers $(\gamma _n)_{n\ge 1}$ and $(\beta _n)_{n\ge 1}$  by
\begin{align}\label{eigchk3}
\gamma_n&=m_{k}^{1/b,bc;q}(q^{n}),\\
\label{defbetch3}
\beta_n&=\frac{c+q^n}{c(1-bq^n)}\frac{\gamma_{n+1}}{\gamma_n},\quad n\ge 1,
\end{align}
and the sequence of polynomials $(q_n)_n$ by $q_0=1$, and
\begin{equation}\label{defqch3}
q_n(x)=m_{n}^{b,c;q}(x)+\beta _nm_{n-1}^{b,c;q}(x), \quad n\ge 1.
\end{equation}
Then the polynomials $(q_n)_n$ are orthogonal with respect
to the moment functional $\tilde \rho _{k,b,c}^q$ (\ref{tme3}).
Moreover, the orthogonal polynomials $(q_n)_n$ are eigenfunctions of a $q$-difference operator of
order $2k+2$.
\end{theorem}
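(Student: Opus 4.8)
The plan is to follow the proof of Theorem \ref{lm51} (the $q$-Meixner I case) essentially verbatim, replacing the $\D$-operator $\D_1$ by $\D_3$ and the moment identity of Lemma \ref{me1x} by its analogue for the functional (\ref{tme3}). As pointed out in Remark \ref{sirme}, the crux is the closed evaluation of $\langle\tilde\rho_{k,b,c}^q,m_n^{b,c;q}\rangle$. I would prove that, for $n\ge 0$,
\[
\langle\tilde\rho_{k,b,c}^q,m_n^{b,c;q}\rangle = C_n\, m_{k}^{1/b,bc;q}(q^{n+1}),
\]
where $(C_n)_n$ is the sequence determined by $C_n/C_{n-1}=-(c+q^n)/(c(1-bq^n))$ and by the normalization $C_0=\langle\tilde\rho_{k,b,c}^q,1\rangle/m_{k}^{1/b,bc;q}(q)$ (the absolute value of $C_0$ being irrelevant; note $m_{k}^{1/b,bc;q}(q)\neq 0$ by hypothesis). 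Writing $\xi_n=\langle\tilde\rho_{k,b,c}^q,m_n^{b,c;q}\rangle$ and applying $\langle\tilde\rho_{k,b,c}^q,\cdot\rangle$ to $(x-q^{k+1})m_n^{b,c;q}$, then expanding $xm_n^{b,c;q}$ by the three term recurrence (\ref{trme})--(\ref{ttrrm}) and using the identity (\ref{tme23}) together with the orthogonality of $(m_n^{b,c;q})_n$ with respect to the right-hand side of (\ref{tme23}) (a nonzero multiple of $\rho_{b,c}^q$, using $b\neq q^{k-j}$, $0\le j\le k$), one gets
\begin{align*}
a_n\xi_{n+1}+(b_n-q^{k+1})\xi_n+c_n\xi_{n-1}&=0,\quad n\ge 1,\\
a_0\xi_1+(b_0-q^{k+1})\xi_0&=c^{k+1}q^{\binom{k+1}{2}}(b/q^k;q)_{k+1}.
\end{align*}
Next I would verify that $\zeta_n=C_n m_{k}^{1/b,bc;q}(q^{n+1})$ solves the same relations: dividing the relation for $\zeta_n$ by $C_n$, substituting the ratios $C_{n\pm1}/C_n$, and simplifying with (\ref{ttrrm}), it becomes exactly the second order $q$-difference equation (\ref{sodeme}) for $m_{k}^{1/b,bc;q}$ evaluated at $x=q^{n+1}$ (here one uses that $BC=c$ when $B=1/b$, $C=bc$); the case $n=0$ uses in addition $m_{k}^{1/b,bc;q}(1)=(-1)^k/(q;q)_k$, which follows from (\ref{defmep}) since $(1;q)_j=0$ for $j\ge 1$. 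Since $(\xi_n)_n$ and $(\zeta_n)_n$ obey the same recurrence, it remains only to check $\xi_0=\zeta_0$, i.e.\ $\langle\tilde\rho_{k,b,c}^q,1\rangle=C_0\,m_{k}^{1/b,bc;q}(q)$; this I would establish by induction on $k$, writing $\langle\tilde\rho_{k,b,c}^q,1\rangle$ in terms of the $k-1$ case through a reduction of the functional (\ref{tme3}) (the analogue, compatible with the shift in (\ref{tme3}), of the identity $(x+bcq^{k+1})\rho_{b,q^{k+1}c}^q=(1+cq^k)\rho_{b,q^{k}c}^q$ used in Lemma \ref{me1x}), doing the same for $m_{k}^{1/b,bc;q}(q)$ via the forward shift operator of the $q$-Meixner polynomials, and comparing the two recursions.

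With the moment identity available, the orthogonality of $(q_n)_n$ is immediate from Lemma \ref{addel}, applied with $\lambda=q^{k+1}$, $\nu=\tilde\rho_{k,b,c}^q$, $\mu=(x-q^{k+1})\tilde\rho_{k,b,c}^q=c^{k+1}q^{\binom{k+1}{2}}(b/q^k;q)_{k+1}\rho_{b,c}^q$, $p_n=m_n^{b,c;q}$ and $M=0$: the numbers $\alpha_n=\langle\tilde\rho_{k,b,c}^q,m_n^{b,c;q}\rangle$ are nonzero for all $n$ (again by hypothesis), and $\beta_n=-\alpha_n/\alpha_{n-1}=\frac{c+q^n}{c(1-bq^n)}\frac{m_{k}^{1/b,bc;q}(q^{n+1})}{m_{k}^{1/b,bc;q}(q^n)}$, which is precisely (\ref{eigchk3})--(\ref{defbetch3}); hence $q_n=m_n^{b,c;q}+\beta_nm_{n-1}^{b,c;q}$ is orthogonal with respect to $\tilde\rho_{k,b,c}^q$. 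For the higher order $q$-difference operator I would invoke Lemma \ref{fl2v} with $D^P=D_{b,c}$, $\theta_n=q^n$, the $\D$-operator $\D_3$ of Lemma \ref{lTme} (so $\varepsilon_n=\varepsilon_{n,3}=(c+q^n)/(c(1-bq^n))$ and $\sigma_n=\sigma_{n,3}=q^{n-1}/(q-1)$, as in (\ref{veme3})), and $P_2(x)=m_{k}^{1/b,bc;q}(qx)$, of degree $k$ and satisfying $P_2(q^n)=\gamma_{n+1}=m_{k}^{1/b,bc;q}(q^{n+1})\neq 0$ by hypothesis. Since $\theta_n=q^n$ and $\sigma_n=\frac{1}{q(q-1)}q^n$, Remark \ref{rem} produces a polynomial $P_1$ of degree $k+1$ meeting the hypotheses of Lemma \ref{fl2v}, and the operator $D^Q=\frac12 P_1(D_{b,c})+\D_3 P_2(D_{b,c})\in\A_q$ has $(q_n)_n$ as eigenfunctions. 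Because $D_{b,c}$ and $\D_3$ have order $2$ (see (\ref{dome3})), $\deg P_1=k+1$ and $\deg P_2=k$ (cf.\ (\ref{defP1})), the order of $D^Q$ is at most $2k+2$; as in Theorem \ref{lm51}, a computation of the leading coefficients shows it is exactly $2k+2$.

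I expect the genuine obstacle to be, exactly as in Lemma \ref{me1x}, the identity $\xi_0=\zeta_0$: it amounts to pinning down the closed value of $\langle\tilde\rho_{k,b,c}^q,1\rangle$, which forces one to find the correct $k$-reduction of the functional (\ref{tme3}) — in particular to handle the shift appearing there — and to match it against the recursion for $m_{k}^{1/b,bc;q}(q)$ coming from the forward shift operator. By contrast, the verification that $\zeta_n$ satisfies the three term recurrence is a mechanical specialization of (\ref{sodeme}) at $x=q^{n+1}$, and the order count for $D^Q$, as well as the application of Lemmas \ref{addel} and \ref{fl2v}, are routine.
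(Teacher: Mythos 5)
Your proposal is correct and follows essentially the same route as the paper: the paper's own proof of this theorem is precisely ``repeat the proof of Theorem \ref{lm51} with the operator $\D_3$ in place of $\D_1$ and with the moment identity $\langle \tilde\rho_{k,b,c}^q,m_n^{b,c;q}\rangle = C_n\, m_k^{1/b,bc;q}(q^{n+1})$ in place of Lemma \ref{me1x}'', and your ratio characterization $C_n/C_{n-1}=-(c+q^n)/(c(1-bq^n))$ agrees exactly with the explicit constant $C_n=(-1)^{n+k}(q;q)_k(b/q^k;q)_k c^{k-n}q^{\binom{n+1}{2}+\binom{k+1}{2}}(-c/q^n;q)_n/(bq;q)_n$ that the paper records. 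You also correctly identify the one step carrying real content (the evaluation of $\xi_0=\langle\tilde\rho_{k,b,c}^q,1\rangle$ by induction on $k$, matched against the forward-shift recursion for $m_k^{1/b,bc;q}(q)$), which is exactly how the paper handles the analogous point in Lemma \ref{me1x}.
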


\begin{proof}
The proof is similar to that of Theorem \ref{lm51}, but using
$$
\langle \tilde \rho_{k,b,c}^q,m_{n}^{b,c;q}\rangle = \frac{(-1)^{n+k}(q;q)_k(b/q^k;q)_kc^{k}q^{\binom{n+1}{2}+\binom{k+1}{2}}(-c/q^n;q)_n}{c^n(bq;q)_n} m_{k}^{1/b,bc;q}(q^{n+1})
$$
instead of (\ref{chx+k+2}),
and the operator $\D_3$ (see (\ref{dome3})) instead of $\D_1$.
\end{proof}

\section{$q$-Laguerre case}
In this section we will construct $q$-Krall polynomials from the $q$-Laguerre polynomials.
We start by recalling the basic definitions and facts about this family of $q$-polynomials.

For $\alpha\not =-1, -2, \cdots $, we define the sequence of $q$-Laguerre polynomials
$(L_n^{\alpha;q})_n$  by
\begin{equation}\label{deflap}
L_n^{\alpha;q}(x)=\frac{(-1)^n}{(q^{\alpha +1};q)_n(q;q)_n}\sum_{j=0}^n\frac{(q^{-n};q)_j(-x;q)_j}{(q;q)_j}q^{j(n+\alpha +1)}
\end{equation}
(we have taken a slightly different normalization from the one used in  \cite{KLS}, pp, 522-526,
from where the next formulas can be easily derived).

The $q$-Laguerre polynomials are eigenfunctions of the following second order $q$-difference operator
\begin{align}\label{sodelag}
D_{\alpha}(p)&=\frac{1}{x}p(x/q)-\frac{1+q^\alpha }{x}p(x)+\frac{q^\alpha (1+x)}{x}p(qx),\\\nonumber
 D_{\alpha} (L_{k}^{\alpha;q})&=q^{\alpha +k}L_{k}^{\alpha;q},\quad k\ge 0.
\end{align}
They are always orthogonal with respect to a moment functional $\rho_{\alpha}^q$, which we
normalize
by taking $\langle \rho_{\alpha}^q,1\rangle =1$. For $0<q<1$, $-1<\alpha$ this moment functional can be represented by several positive measures (the corresponding moment problem is indeterminate); one of them is the following:
\begin{equation}\label{lagw}
\rho_{\alpha }^q= \frac{(q;q)_\infty}{(q^{-\alpha};q)_\infty}\frac{x^\alpha}{(-x;q)_\infty},
\quad x>0.
\end{equation}

For $q$-Laguerre polynomials we have found two different $\D$-operators.

\begin{lemma}\label{lTlag}
For $\alpha\not =-1, -2, \cdots $, consider the $q$-Laguerre polynomials $(L_{n}^{\alpha ;q})_n$ (\ref{deflap}). Then, the operators $\D _i$,
$i=1,2$, defined by (\ref{defTo2}) from the sequences ($n\ge 0$)
\begin{align}\label{velag1}
\varepsilon _{n,1}&=1,\quad &\sigma_{n,1}&=q^{n-1},\\\label{velag2}
\varepsilon _{n,2}&=\frac{1}{1-q^{\alpha +n}},&\sigma_{n,2}&=q^{n-1},
\end{align}
are $\D$-operators for $(L_{n}^{\alpha ;q})_n$ and the algebra $\A _q$  (\ref{algdiff}) of $q$-difference
operators  with rational coefficients.
More precisely
\begin{align}\label{dolag1}
\displaystyle \D_1&=\frac{1}{2}\left( (q-1)(1-x)D_{q}+\frac{1-q}{q^{\alpha +1}}D_{1/q}-I\right),\\\label{dolag2}
\displaystyle \D_2&=\frac{1}{2}\left( -(q-1)(1+x)D_{q}+\frac{1-q}{q^{\alpha +1}}D_{1/q}-I\right).
\end{align}
\end{lemma}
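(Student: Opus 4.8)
The plan is to follow the scheme used in Lemma~\ref{lTme} for the $q$-Meixner operators: reduce each of \eqref{dolag1}, \eqref{dolag2} to an explicit connection formula between $q$-Laguerre polynomials, and then establish those formulas. Observe first that the operators appearing on the right of \eqref{dolag1} and \eqref{dolag2} are combinations of $D_q$, $D_{1/q}$ and $I$, hence $q$-difference operators of order $2$ with rational coefficients; that is, they already belong to $\A_q$. Thus the only thing to be proved is that each of them agrees with the linear map defined abstractly by \eqref{defTo2} from the corresponding pair of sequences $(\varepsilon_{n,i})_n$, $(\sigma_{n,i})_n$. Since $(L_n^{\alpha;q})_n$ is a basis of $\PP$, it suffices to check this agreement on every $L_n^{\alpha;q}$.

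Consider $\D_1$. Writing the $q$-difference operators through the shift maps $T_{\pm1}p(x)=p(q^{\pm1}x)$ (one has $x(q-1)D_q=T_1-I$, $\frac{1-q}{q^{\alpha+1}}D_{1/q}=\frac{q^{-\alpha}}{x}(T_{-1}-I)$, and $xD_{\alpha}=T_{-1}-(1+q^\alpha)I+q^\alpha(1+x)T_1$ from \eqref{sodelag}), a short computation reduces the right-hand side of \eqref{dolag1} to $\frac12 q^{-\alpha}D_{\alpha}-T_1$. Using the eigenvalue relation $D_\alpha(L_n^{\alpha;q})=q^{\alpha+n}L_n^{\alpha;q}$ of \eqref{sodelag}, this operator sends $L_n^{\alpha;q}$ to $\frac12 q^{n}L_n^{\alpha;q}(x)-L_n^{\alpha;q}(qx)$. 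On the other hand, plugging $\varepsilon_{n,1}=1$, $\sigma_{n,1}=q^{n-1}$ into \eqref{defTo2} gives
\[
\D_1(L_n^{\alpha;q})=-\frac12 q^{n}L_n^{\alpha;q}+\sum_{j=1}^{n}(-1)^{j+1}q^{n-j}L_{n-j}^{\alpha;q}.
\]
Equating the two expressions and reindexing the sum, \eqref{dolag1} becomes equivalent to the $q$-translation formula
\[
L_n^{\alpha;q}(qx)=\sum_{m=0}^{n}(-1)^{n-m}q^{m}L_m^{\alpha;q}(x),\qquad n\ge 0 .
\]

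The heart of the argument is the proof of this last identity, and here I would proceed exactly as in the $q$-Meixner case: expand both sides by means of the basic hypergeometric representation \eqref{deflap}, reduce the resulting coefficient identity to a ${}_2\phi_1$ (or ${}_3\phi_2$) evaluation, and invoke a transformation/summation formula from \cite{GR} together with the forward and backward shift relations for $q$-Laguerre polynomials listed in \cite{KLS}. A more self-contained alternative is induction on $n$: the partial sums $S_n:=\sum_{m=0}^{n}(-1)^{n-m}q^{m}L_m^{\alpha;q}$ satisfy $S_n=q^{n}L_n^{\alpha;q}-S_{n-1}$, so the translation formula is equivalent to
\[
L_n^{\alpha;q}(qx)+L_{n-1}^{\alpha;q}(qx)=q^{n}L_n^{\alpha;q}(x),\qquad n\ge 1,
\]
(the case $n=0$ being trivial, $L_0^{\alpha;q}\equiv 1$), a relation that can be read off from the structure relations for $q$-Laguerre polynomials in \cite{KLS}, or checked directly from \eqref{deflap} after using $(-qx;q)_j=(-x;q)_{j+1}/(1+x)$.

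The operator $\D_2$ is dealt with in the same way. Comparing \eqref{dolag1} and \eqref{dolag2} one sees immediately that $\D_2=\D_1-(q-1)D_q$, which again puts it in $\A_q$; substituting $\varepsilon_{n,2}=1/(1-q^{\alpha+n})$, $\sigma_{n,2}=q^{n-1}$ into \eqref{defTo2} (note $\prod_{i=1}^{j}\varepsilon_{n-i+1,2}=1/(q^{\alpha+n-j+1};q)_j$) gives
\[
\D_2(L_n^{\alpha;q})=-\frac12 q^{n}L_n^{\alpha;q}+\sum_{j=1}^{n}(-1)^{j+1}\frac{q^{n-j}}{(q^{\alpha+n-j+1};q)_j}L_{n-j}^{\alpha;q},
\]
and matching this with the action of $\D_1-(q-1)D_q$ on $L_n^{\alpha;q}$ reduces \eqref{dolag2} to a second connection formula of the same flavour (equivalently, to the $q$-Laguerre analogue expressing $(q-1)D_qL_n^{\alpha;q}$ in the basis $(L_m^{\alpha;q})_m$), established by the same method. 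The main obstacle of the whole proof is thus the verification of these two explicit $q$-Laguerre connection formulas; the remaining steps --- rewriting the $\D_i$ through shift operators, checking membership in $\A_q$, and the bookkeeping in \eqref{defTo2} --- are routine.
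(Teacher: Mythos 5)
Your proposal is correct, but it follows a genuinely different route from the paper. The paper proves \eqref{dolag2} in detail (and leaves \eqref{dolag1} to the reader) by quoting two parameter-shifting formulas: the connection formula \eqref{mmm1} between $(L_{m}^{\alpha-1;q})_m$ and $L_n^{\alpha;q}$, and the lowering relation \eqref{mmm2} $D_{1/q}L_n^{\alpha;q}=\frac{q^{\alpha+1}}{(1-q)(1-q^{\alpha+1})}L_{n-1}^{\alpha+1;q}$; with these the alternating sum in \eqref{defTo2} telescopes into the single polynomial $\frac{1}{1-q^{\alpha+1}}L_{n-1}^{\alpha+1;q}$, which identifies $\D_2=-\frac{1}{2q^{\alpha}}D_{\alpha}+\frac{1-q}{q^{\alpha+1}}D_{1/q}$ at once. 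You instead treat \eqref{dolag1} in detail, stay entirely within the fixed-$\alpha$ family, and reduce the claim to the dilation connection formula $L_n^{\alpha;q}(qx)+L_{n-1}^{\alpha;q}(qx)=q^nL_n^{\alpha;q}(x)$; I checked your operator identity $\D_1=\frac{1}{2}q^{-\alpha}D_{\alpha}-T_1$ and the resulting reduction, and they are correct, as is the two-term relation itself. Your route buys a more elementary and self-contained key identity; the paper's buys a shorter computation at the price of invoking standard $\alpha$-shifting relations from \cite{KLS}.

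The one place you are thinner than you should be is $\D_2$. The ``second connection formula'' you defer to is, after subtracting the $\D_1$ case, precisely
\begin{equation*}
\frac{L_n^{\alpha;q}(qx)-L_n^{\alpha;q}(x)}{x}=\sum_{j=1}^{n}(-1)^{j}q^{n-j}\left(\frac{1}{(q^{\alpha+n-j+1};q)_j}-1\right)L_{n-j}^{\alpha;q}(x),
\end{equation*}
and this is not ``of the same flavour'' as the dilation formula: the Pochhammer denominators make it essentially equivalent to the paper's pair \eqref{mmm1}--\eqref{mmm2}, i.e.\ it is the hardest single identity in the lemma. Writing it down and proving it (most cleanly via exactly those two relations, or by a ${}_2\phi_1$ coefficient computation) is what is still owed; the rest of your argument is complete and sound.
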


\begin{proof}
We include a detailed proof of the formula (\ref{dolag2}) for the $\D$-operator $\D_2$.
The proof for the other  $\D$-operator can be deduced in a similar way and has been omitted.

We need the two following standard formulas for $q$-Laguerre polynomials
\begin{align}\label{mmm1}
(-1)^n(q^{\alpha +1};q)_nL_n^{\alpha ;q}(x)&=\sum_{j=0}^n(-q)^{n-j}(q^{\alpha };q)_{n-j}L_{n-j}^{\alpha -1 ;q}(x),\\\label{mmm2}
D_{1/q}(L_n^{\alpha ;q}(x))&=\frac{q^{\alpha +1}}{(1-q)(1-q^{\alpha +1})}L_{n-1}^{\alpha +1 ;q}(x).
\end{align}

Using the definition of a $\D$-operator (\ref{defTo2}), the second order $q$-difference operator for the $q$-Laguerre polynomials (\ref{sodelag}) and then (\ref{mmm1}) and (\ref{mmm2}), we have
\begin{align*}
\D_2(L_{n}^{\alpha ;q})&=-\frac{q^n}{2}L_{n}^{\alpha ;q}(x)+\sum_{j=1}^n (-1)^{j+1}\frac{q^{n-j}}{\prod_{i=0}^{j-1}(1-q^{\alpha +n-i})}L_{n-j}^{\alpha ;q}(x)=\\
&=-\frac{q^n}{2}L_{n}^{\alpha ;q}-\frac{(-1)^n}{(q^{\alpha +1};q)_n}\sum_{j=1}^n (-q)^{n-j}(q^{\alpha +1};q)_{n-j}L_{n-j}^{\alpha ;q}\\
&=-\frac{q^n}{2}L_{n}^{\alpha ;q}+\frac{1}{1-q^{\alpha+1}}L_{n-1}^{\alpha +1 ;q}(x)\\
&=\frac{-1}{2q^\alpha}D_\alpha (L_{n}^{\alpha ;q})+\frac{1-q}{q^{\alpha +1}}D_{1/q}(L_n^{\alpha ;q}(x)).
\end{align*}
This gives that $\D_2=\frac{-1}{2q^\alpha}D_\alpha +\frac{1-q}{q^{\alpha +1}}D_{1/q}$. A straightforward calculation
using (\ref{sodelag}) completes the proof.

\end{proof}

\bigskip
In the next two subsections, using the $\D$-operators displayed
in Lemma \ref{lTlag} we construct from the $q$-Laguerre polynomials new families of $q$-Krall polynomials.
The strategy is similar to what we have done for the $q$-Meixner polynomials in the previous Section (see Remark \ref{sirme}), and hence only sketch of the proofs are included. However, there are some differences, mainly for the operator $\D_2$. In this case, we have to assume that the parameter $\alpha $ is a positive integer. The choice of $P_2$ is not so symmetric as in the $q$-Meixner case (in this case $P_2$ is, essentially, a $q$-Pochhammer symbol). In particular, the orthogonalizing measure for the polynomials $(q_n)_n$ is equal to $\rho_{\alpha -1} ^q$ plus a Dirac delta at $0$. But, on the other hand, we can introduce a new parameter in the polynomials $(q_n)_n$: the mass $M$ at the Dirac delta at $0$.

The situation with the operator $\D_1$ is quite similar to the $q$-Meixner case, if we except that to get the orthogonality of $(q_n)_n$
we have to choose the polynomial $P_2$ in the relative family of Al-Salam-Carlitz polynomials $(v_n^{a;q})_n$.

For a fixed $k\ge 1$ and for each one of the two different operators $\D_1$ and $\D_2$  displayed in the previous Lemma,
we show in the following table our choice of the polynomial $P_2$ ($\deg (P_2)=k$) and the orthogonalizing measure $\tilde \rho_{k,\alpha }^q$ for the polynomials $(q_n)_n$.

\bigskip

\begin{center}
\begin{tabular}{||l | c| c||}
\hline
$\D$-operator & $P_2(x)$ & Measure $\tilde \rho _{k,\alpha }^q$ \\
\hline
$\D_1$  & $v_k^{1/q^\alpha;q}(x/q^{\alpha -1})$ & $(1+x/q)\cdots (1+x/q^k)\rho_{\alpha}^q(x/q^{n+1})$ \\
\hline
$\D_2$; $\alpha =k$  & $1+M(x/q^{k -2};q)_k/(q;q)_k$ & $\rho_{k -1}^q+M\delta_0$\\
\hline
\end{tabular}
\end{center}

\bigskip

\subsection{$q$-Laguerre I}
For $k$ a positive integer, and  $\alpha\not =-1, -2, \cdots $, let $\tilde \rho_{k,\alpha }^q$ be the moment functional defined by
\begin{equation}\label{tlag1}
\tilde \rho_{k,\alpha}^q=\left(1 +\frac{x}{q}\right) \cdots \left(1 +\frac{x}{q^k}\right) \rho_{\alpha}^q(x/q^{k+1}).
\end{equation}
A simple calculation gives that
\begin{equation}\label{tlag12}
\left(1 +\frac{x}{q^{k+1}}\right)\tilde \rho_{k,\alpha}^q=\frac {1}{q^{\alpha (k+1)}}\rho_{\alpha }^q.
\end{equation}
Hence, for $0<q<1$, $-1<\alpha$, this moment functional can be represented by the positive measure
$$
\tilde \rho_{k,\alpha}^q=\frac{(q;q)_\infty}{q^{\alpha (k+1)}(q^{-\alpha};q)_\infty}\frac{x^\alpha}{(1+x/q^{k+1})(-x;q)_\infty},\quad x>0.
$$
We also need the Al-Salam-Carlitz polynomials $(v_n^{a,q})_n$ (\cite{KLS}, pp. 537--540) defined by
\begin{equation}\label{defasc}
v_n^{a;q}(x)=\sum_{j=0}^n\frac{(-1)^j(q^{-n};q)_j(x;q)_j q^{-\binom{2}{2}+jn}}{a^j(q;q)_j}.
\end{equation}
Orthogonal polynomials with respect to $\tilde \rho _{k,\alpha}^q$  are also eigenfunctions of a
higher order $q$-difference operator.

\begin{theorem} \label{lm533}
For $k\ge 1$, let $\alpha$ be a real number satisfying that $\alpha\not =-1, -2, \cdots $, and $v_{k}^{1/q^\alpha;q}(q^{n})\not =0$, $n\ge 1$, where $v_{k}^{1/q^\alpha ;q}$, $k\ge 1$, are
the Al-Salam-Carlitz polynomials (see (\ref{defasc})).
We define the sequences of numbers $(\gamma _n)_{n\ge 1}$ and $(\beta _n)_{n\ge 1}$  by
\begin{align}\label{eiglagk}
\gamma_n&=v_{k}^{1/q^\alpha;q}(q^{n}),\\
\label{defbetlag}
\beta_n&=\frac{\gamma_{n+1}}{\gamma_n},\quad n\ge 1,
\end{align}
and the sequence of polynomials $(q_n)_n$ by $q_0=1$, and
\begin{equation}\label{defqlag}
q_n(x)=L_{n}^{\alpha ;q}(x)+\beta _nL_{n-1}^{\alpha;q}(x), \quad n\ge 1.
\end{equation}
Then the polynomials $(q_n)_n$ are orthogonal with respect
to the moment functional $\tilde \rho _{k,\alpha}^q$ (\ref{tlag1}).
Moreover, the orthogonal polynomials $(q_n)_n$ are eigenfunctions of a $q$-difference operator of order $2k+2$.
\end{theorem}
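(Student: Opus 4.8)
The plan is to follow closely the proof of Theorem \ref{lm51}, treating the orthogonality and the $q$-difference equation separately.

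\emph{Orthogonality.} The first and main step is to establish the analogue of Lemma \ref{me1x}: an explicit closed form
$$
\langle \tilde \rho_{k,\alpha}^q,L_{n}^{\alpha;q}\rangle = c_n\, v_{k}^{1/q^\alpha;q}(q^{n+1}),
$$
where $c_n$ is an explicit constant with $c_n=-c_{n-1}$ (so $c_n$ carries a factor $(-1)^n$). Granting this, I would apply Lemma \ref{addel} with $\nu=\tilde \rho_{k,\alpha}^q$, $\lambda=-q^{k+1}$, $p_n=L_n^{\alpha;q}$ and $M=0$: by (\ref{tlag12}) one has $(x-\lambda)\nu=(1+x/q^{k+1})\tilde\rho_{k,\alpha}^q=q^{-\alpha(k+1)}\rho_\alpha^q=:\mu$, so the $p_n$ are orthogonal with respect to $\mu$ as required, and (\ref{hk2}) gives $\beta_n=-\langle\nu,L_n^{\alpha;q}\rangle/\langle\nu,L_{n-1}^{\alpha;q}\rangle=v_{k}^{1/q^\alpha;q}(q^{n+1})/v_{k}^{1/q^\alpha;q}(q^{n})=\gamma_{n+1}/\gamma_n$, which is exactly (\ref{defbetlag}). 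Hence $(q_n)_n$ is orthogonal with respect to $\tilde \rho_{k,\alpha}^q$.

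To prove the closed form I would mimic the proof of Lemma \ref{me1x}. Set $\xi_n=\langle \tilde \rho_{k,\alpha}^q,L_{n}^{\alpha;q}\rangle$ and $\zeta_n=c_n v_{k}^{1/q^\alpha;q}(q^{n+1})$. Using the three term recurrence for the $q$-Laguerre polynomials together with the relation $(1+x/q^{k+1})\tilde\rho_{k,\alpha}^q=q^{-\alpha(k+1)}\rho_\alpha^q$, one shows that $(\xi_n)_n$ satisfies a fixed three term recurrence; and substituting $x=q^{n+1}$ into the second order $q$-difference equation for the Al-Salam--Carlitz polynomials $v_k^{1/q^\alpha;q}$ (which one derives from the data in \cite{KLS}), together with the explicit recurrence coefficients of the $q$-Laguerre polynomials, shows that $(\zeta_n)_n$ satisfies the same recurrence. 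It then remains only to check the base case $\xi_0=\zeta_0$, which as in Lemma \ref{me1x} requires its own short induction on $k$ using a forward-shift/contiguous relation.

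\emph{Higher order $q$-difference operator.} This is a direct application of Lemma \ref{fl2v} with $D^P=D_\alpha$, so that $\theta_n=q^{\alpha+n}$ by (\ref{sodelag}), with $\varepsilon_n=\varepsilon_{n,1}=1$ and $\sigma_n=\sigma_{n,1}=q^{n-1}$, which by Lemma \ref{lTlag} define the $\D$-operator $\D_1$ of (\ref{dolag1}), and with $P_2(x)=v_k^{1/q^\alpha;q}(x/q^{\alpha-1})$. Then $P_2(\theta_n)=v_k^{1/q^\alpha;q}(q^{n+1})\neq0$ for $n\ge0$ by hypothesis, so (\ref{defbetng2}) reproduces (\ref{defbetlag}). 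Since $\theta_n=q^\alpha\cdot q^n$ and $\sigma_n=q^{-1}\cdot q^n$ have the form required in Remark \ref{rem}, there is a polynomial $P_1$, of degree $k+1$ and given by (\ref{defP1}), with $\lambda_{n+1}+\lambda_n=P_1(\theta_n)$; Lemma \ref{fl2v} then gives $D^Q(q_n)=\lambda_n q_n$, where $D^Q=\tfrac12 P_1(D_\alpha)+\D_1 P_2(D_\alpha)\in\A_q$ by (\ref{defD2}). Finally, since $D_\alpha$ has order $2$, $\D_1$ has order $2$ (it combines $D_q$, $D_{1/q}$ and $I$), and $\deg P_1=k+1$, $\deg P_2=k$, each of the two summands has order $2k+2$; a check that the top-order coefficients of $\tfrac12 P_1(D_\alpha)$ and $\D_1 P_2(D_\alpha)$ do not cancel shows that the order of $D^Q$ is exactly $2k+2$.

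\emph{Main obstacle.} The hard part is the auxiliary closed-form evaluation of $\langle\tilde\rho_{k,\alpha}^q,L_n^{\alpha;q}\rangle$ in terms of Al-Salam--Carlitz polynomials: one needs the precise second order $q$-difference equation for $v_k^{1/q^\alpha;q}$ to verify the recurrence for $\zeta_n$, and then the delicate base-case identity $\xi_0=\zeta_0$ requiring an induction on $k$. The order count and the verification that no accidental cancellation lowers it below $2k+2$ are routine but must be carried out.
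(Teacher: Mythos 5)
Your proposal is correct and follows essentially the same route as the paper: the paper proves this theorem by citing the closed form $\langle \tilde \rho_{k,\alpha}^q,L_{n}^{\alpha;q}\rangle = (-1)^{n+k}q^{\binom{k}{2}}v_{k}^{1/q^\alpha;q}(q^{n+1})$ (matching your $c_n$ with $c_n=-c_{n-1}$), applying Lemma \ref{addel} with $\lambda=-q^{k+1}$ and $M=0$, and then invoking Lemma \ref{fl2v} with $\D_1$ of Lemma \ref{lTlag} and $P_2(x)=v_k^{1/q^\alpha;q}(x/q^{\alpha-1})$, exactly as you describe. Your outline of how to establish the closed form (same three-term recurrence for $\xi_n$ and $\zeta_n$ plus the base case $\xi_0=\zeta_0$) is precisely the mechanism of the paper's Lemma \ref{me1x}, to whose proof the paper defers.
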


\begin{proof}
The proof is similar to that of Theorem \ref{lm51}, but using
$$
\langle \tilde \rho_{k,\alpha}^q,L_{n}^{\alpha;q}\rangle = (-1)^{n+k}q^{\binom{k}{2}}v_{k}^{1/q^\alpha;q}(q^{n+1})
$$
instead of Lemma \ref{me1x}, and the operator $\D_1$ (see (\ref{dolag1})) instead of the $\D$-operator defined by (\ref{dome1}).
\end{proof}

\subsection{$q$-Laguerre II}
For $\alpha \not =0,-1,-2, \cdots $ and $M\not =-(q;q)_n/(q^{\alpha+1};q)_n$, $n\ge 1$, consider
the moment functional $\tilde \rho_{\alpha, M}^q$ defined by
\begin{equation}\label{darlag}
\tilde \rho_{M, \alpha}^q=M\delta _0+\rho _{\alpha-1}^q,
\end{equation}
where $\rho_\alpha ^q$ is the orthogonalizing moment functional for the $q$-Laguerre polynomials $(L_{n}^{\alpha ;q})_n$.

Hence, for $0<q<1$, $0<\alpha$ and $M\ge 0$, this moment functional can be represented by the positive measure
\begin{equation}\label{darlag2}
\tilde \rho_{M,\alpha}^q=M\delta _0+\frac{(q;q)_\infty}{(q^{-\alpha+1};q)_\infty}\frac{x^{\alpha -1}}{(-x;q)_\infty},\quad x>0.
\end{equation}

Using Lemma \ref{addel}, we
 find explicitly a sequence of orthogonal polynomials with respect to $\tilde \rho _{M,\alpha}^q$
in terms of $p_{n}=L_{n}^{q,\alpha }$, $n\ge 0$.

\begin{lemma}\label{l4.1}
Let $(\gamma _{n})_{n\ge 1}$ be the sequence of numbers defined by
\begin{equation}\label{defdellagnn}
\gamma_{n}=1+M\frac{(q^{\alpha+1};q)_n}{(q;q)_n}.
\end{equation}
Write
\begin{equation}\label{defbetlagnn}
\beta_n=-\frac{\gamma_{n+1}}{(1-q^{\alpha +n})\gamma_{n}},\quad n\ge 1.
\end{equation}
Then the polynomials defined by $q_0=1$ and
\begin{equation}\label{defqnlagnn}
q_{n}(x)=L_{n}^{q,\alpha }(x)+\beta _nL_{n-1}^{q,\alpha }(x), \quad n\ge 1,
\end{equation}
are orthogonal with respect to $\tilde \rho_{M,\alpha}^q$ (\ref{darlag}).
\end{lemma}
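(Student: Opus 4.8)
The plan is to apply Lemma \ref{addel} directly, with the roles of the various objects identified as follows: take $\nu=\rho_{\alpha-1}^q$, $\lambda=0$, $\mu=(x-0)\nu=x\rho_{\alpha-1}^q$, $p_n=L_n^{q,\alpha}$ (so that, as is standard and can be checked from \eqref{deflap}, the $q$-Laguerre polynomials with parameter $\alpha$ are precisely the orthogonal polynomials for the measure $x\rho_{\alpha-1}^q$), and $M$ the prescribed mass. Then $\tilde\rho_{M,\alpha}^q=\nu+M\delta_0=\tilde\rho$ in the notation of the Lemma, and the conclusion will follow once we show that $\alpha_n=\int L_n^{q,\alpha}\,d\rho_{\alpha-1}^q$ has the value that makes \eqref{hk2} reduce to \eqref{defbetlagnn} with $\gamma_n$ given by \eqref{defdellagnn}.

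First I would verify the relation $x\rho_{\alpha-1}^q=q^{-?}\rho_\alpha^q$ (up to a normalizing constant determined by $\langle\rho_\alpha^q,1\rangle=1$), which is the $q$-Laguerre analogue of the identity $x\cdot x^{\alpha-1}e^{-x}=x^\alpha e^{-x}$; this is immediate from the measure representation \eqref{lagw} but I would phrase it at the level of moment functionals so as not to invoke positivity. This identifies $L_n^{q,\alpha}$ as the orthogonal polynomials for $\mu=x\rho_{\alpha-1}^q$, so Lemma \ref{addel} applies. Next I would compute $\alpha_n=\langle\rho_{\alpha-1}^q,L_n^{q,\alpha}\rangle$. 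The clean way is to use the known connection formula expressing $L_n^{q,\alpha}$ as a combination of $L_m^{q,\alpha-1}$ (formula \eqref{mmm1} of Lemma \ref{lTlag}, with $\alpha$ shifted), and then use $\langle\rho_{\alpha-1}^q,L_m^{q,\alpha-1}\rangle=\delta_{m,0}$ (orthogonality together with the normalization $\langle\rho_{\alpha-1}^q,1\rangle=1$). This picks out a single term and yields $\alpha_n$ as an explicit $q$-factorial expression; I expect $\alpha_n=c_n$ for an explicit $c_n$ with $c_0=1$.

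Once $\alpha_n$ is in hand, the remaining work is bookkeeping: substitute $\alpha_n$ and $L_n^{q,\alpha}(0)$ (evaluate \eqref{deflap} at $x=0$, where $(-x;q)_j$ reduces nicely) into the formula $\beta_n=-(\alpha_n+ML_n^{q,\alpha}(0))/(\alpha_{n-1}+ML_{n-1}^{q,\alpha}(0))$ from \eqref{hk2}, and check that the common factor in numerator and denominator is exactly $\gamma_n=1+M(q^{\alpha+1};q)_n/(q;q)_n$ up to the scalar $-(1-q^{\alpha+n})^{-1}$, recovering \eqref{defbetlagnn}. The hypothesis $M\neq-(q;q)_n/(q^{\alpha+1};q)_n$ is precisely the condition $\gamma_n\neq0$, i.e.\ the nonvanishing of $\alpha_{n-1}+ML_{n-1}^{q,\alpha}(0)$ required implicitly in Lemma \ref{addel}. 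The main obstacle is the explicit evaluation of $\alpha_n$: one must correctly match the normalization conventions for $L_n^{q,\alpha}$ used here (which, as noted after \eqref{deflap}, differ slightly from \cite{KLS}) against the connection coefficients in \eqref{mmm1}, and be careful that $\langle\rho_{\alpha-1}^q,1\rangle$ has been normalized to $1$ rather than to the total mass of the measure \eqref{lagw}. Everything else is a routine $q$-Pochhammer manipulation, so I would state the value of $\alpha_n$ as a short displayed computation and leave the final substitution to the reader, exactly in the telegraphic style used for the other $q$-Meixner and $q$-Laguerre theorems above.
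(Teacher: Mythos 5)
Your proposal follows essentially the same route as the paper's own proof: apply Lemma \ref{addel} with $\lambda=0$, $\nu=\rho_{\alpha-1}^q$, $\mu=\rho_\alpha^q$ (which is $x\rho_{\alpha-1}^q$ up to the normalizing constant), $p_n=L_n^{\alpha;q}$, compute $\alpha_n=(-1)^n/(q^{\alpha+1};q)_n$ from the connection formula \eqref{mmm1}, and substitute together with $L_n^{\alpha;q}(0)=(-1)^n/(q;q)_n$ into \eqref{hk2}. The only extra content in your write-up is the explicit check that $x\rho_{\alpha-1}^q$ is proportional to $\rho_\alpha^q$, which the paper leaves implicit; this is fine and the argument is correct.
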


\begin{proof}
In the notation of Lemma \ref{addel}, we have $\lambda=0$, $\nu =\rho _{\alpha -1}^q$, $\mu=\rho_{\alpha}^q$ and $p_n=L_{n}^{\alpha ;q}$.
The identity (\ref{mmm1})
gives $\alpha_{n}=(-1)^n/(q^{\alpha +1};q)_n$. Substituting the obtained expression for $\alpha_n$
as well as the value $p_n(0)=(-1)^n/(q;q)_n$ in (\ref{hk2})
and doing some straightforward calculations we get (\ref{defbetlagnn}).
\end{proof}

\bigskip

When $\alpha$ is a positive integer, the sequence $(\gamma_n)_n$ (\ref{defdellagnn}) is a polynomial
in $q^{n}$ of degree $\alpha$, and then Lemma \ref{fl2v} and Remark \ref{rem} give a  $(2\alpha +2)$-th order $q$-difference operators for which the  polynomials (\ref{defqnlagnn}) are eigenfunctions.

\begin{theorem} \label{lm53}
Let $\alpha $ be a positive integer.
Consider the sequences of numbers $(\gamma _n)_{n\ge 1}$ and $(\beta _n)_{n\ge 1}$ defined  by (\ref{defdellagnn})
and (\ref{defbetlagnn}), respectively, and  the sequence of polynomials $(q_n)_n$ defined by (\ref{defqnlagnn}).
Then the polynomials $(q_n)_n$ are orthogonal with respect
to the moment functional $\tilde \rho _{M,\alpha}^q$ (\ref{darlag2}) and are eigenfunctions
of a $q$-difference operator of
order $2\alpha +2$.
\end{theorem}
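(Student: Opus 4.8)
The orthogonality is already established: by Lemma \ref{l4.1}, with the $\gamma_n$ of (\ref{defdellagnn}) and the $\beta_n$ of (\ref{defbetlagnn}), the polynomials $(q_n)_n$ of (\ref{defqnlagnn}) are orthogonal with respect to the moment functional $\tilde\rho_{M,\alpha}^q=M\delta_0+\rho_{\alpha-1}^q$; and for $0<q<1$, $\alpha$ a positive integer and $M\ge 0$ this functional is represented by the positive measure (\ref{darlag2}). So the only new content is that the $(q_n)_n$ are eigenfunctions of a $q$-difference operator of order exactly $2\alpha+2$.

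For this I would apply Lemma \ref{fl2v} with $\A=\A_q$, $(p_n)_n=(L_n^{\alpha;q})_n$, $D^P=D_\alpha$ the second order $q$-difference operator (\ref{sodelag}) (so the eigenvalues are $\theta_n=q^{\alpha+n}$), and the $\D$-operator $\D=\D_2$ of Lemma \ref{lTlag}, with $\varepsilon_{n,2}$ and $\sigma_{n,2}$ as in (\ref{velag2}). The key point, noted just before the statement, is that for $\alpha$ a positive integer $(q^{\alpha+1};q)_n/(q;q)_n=(1-q^{n+1})(1-q^{n+2})\cdots(1-q^{n+\alpha})/(q;q)_\alpha$ is a polynomial in $q^n$ of degree $\alpha$; hence so is $\gamma_n$, and there is a polynomial $P_2$ of degree $\alpha$ (the one displayed in the table above) for which the $\gamma$-sequence fed into Lemma \ref{fl2v} satisfies $\gamma_{n+1}=P_2(\theta_n)$. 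The condition $P_2(\theta_n)\ne 0$ reduces to $\gamma_n\ne 0$, which holds under the standing hypothesis $M\ne -(q;q)_n/(q^{\alpha+1};q)_n$ and automatically for $M\ge 0$. One then checks that the numbers $\beta_n=\varepsilon_{n,2}\gamma_{n+1}/\gamma_n$ produced by Lemma \ref{fl2v} are exactly those of (\ref{defbetlagnn}) (matching the two normalizations of the $\gamma$'s), so that the polynomials $(q_n)_n$ of the theorem are precisely the ones furnished by Lemma \ref{fl2v}. Finally, since $\theta_n=q^\alpha q^n$ and $\sigma_{n,2}=q^{-1}q^n$ have the form required in Remark \ref{rem}, that remark provides a polynomial $P_1$ (given by (\ref{defP1}), of degree $\alpha+1$) satisfying the last hypothesis of Lemma \ref{fl2v}. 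Thus $D^Q(q_n)=\lambda_n q_n$ with $D^Q=\frac12 P_1(D_\alpha)+\D_2\,P_2(D_\alpha)\in\A_q$.

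It remains to pin down the order of $D^Q$. Writing $\D_2=-\frac{1}{2q^\alpha}D_\alpha+\frac{1-q}{q^{\alpha+1}}D_{1/q}$, as obtained in the proof of Lemma \ref{lTlag}, and recalling that $D_\alpha$ is a second order $q$-difference operator: since $\deg P_1=\alpha+1$, the operator $P_1(D_\alpha)$ has order $2\alpha+2$; since $\deg P_2=\alpha$, the operator $D_\alpha P_2(D_\alpha)$ has order $2\alpha+2$ while $D_{1/q}P_2(D_\alpha)$ has order $2\alpha+1$. Hence $D^Q$ has order at most $2\alpha+2$, and to get equality one checks that the coefficients of the two extreme shifts $p(q^{\pm(\alpha+1)}x)$ in $\frac12 P_1(D_\alpha)-\frac{1}{2q^\alpha}D_\alpha P_2(D_\alpha)$ do not cancel (for $M\ne 0$); the explicit form (\ref{defP1}) of $P_1$ gives its leading coefficient, which is what makes this computation manageable.

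I expect this exact-order verification to be the one genuinely laborious step: one must track the top- and bottom-shift coefficients through the powers $D_\alpha^{\,j}$ and through the composition with $\D_2$, and rule out the a priori possible cancellation between $\frac12 P_1(D_\alpha)$ and $\D_2 P_2(D_\alpha)$ at the outermost shifts. Everything else runs parallel to the $q$-Meixner arguments of Section \ref{sec-mei}, in particular the proof of Theorem \ref{lm51} and Remark \ref{sirme}.
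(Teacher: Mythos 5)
Your proposal follows exactly the route the paper intends: orthogonality is quoted from Lemma \ref{l4.1}, and the eigenfunction property comes from Lemma \ref{fl2v} and Remark \ref{rem} applied to the $\D$-operator $\D_2$ of Lemma \ref{lTlag} with the degree-$\alpha$ polynomial $P_2$ from the table (the paper in fact gives no separate proof of Theorem \ref{lm53}, stating this reduction in the paragraph preceding it). Your additional care about matching the normalization of the $\gamma_n$'s and about verifying that the extreme shift coefficients do not cancel (so the order is exactly $2\alpha+2$, for $M\neq 0$) is consistent with, and somewhat more explicit than, what the paper records.
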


\bigskip

To conclude this paper let us point out that the above Theorems  \ref{lm533} and \ref{lm53}
suggest the following conjectures.

\bigskip

\noindent
\textbf{Conjecture B1.} Let $F$ be a finite set of positive integers, then the orthogonal polynomials with respect to the positive measure
$$
\prod_{f\in F}(1+xq^f)\rho_{\alpha}^q
$$
are eigenfunctions of a $q$-difference operator of order $2\sum_{f\in F}f-n_F(n_F-1)+2$, where $n_F$ is the number of elements of $F$.

\bigskip
\noindent
\textbf{Conjecture B2.} Given a nonnegative integer $K$, since the moment problem associated to the $q$-Laguerre polynomials is indeterminate, we have for real numbers $M_j$, $j=0,\cdots , K$, with small enough absolute value, that the moment functional
$$
\prod_{f\in F}(1+xq^f)\rho_{\alpha}^q+\sum_{j=0}^KM_j\delta ^{(j)}_0
$$
can be represented by a positive measure.
If $\alpha $ is a nonnegative integer with $\alpha >K$ then, the orthogonal polynomials with respect to that positive measure are eigenfunctions of a higher order $q$-difference operator.


\begin{thebibliography}{GPT4}

\bibitem{BH} H. Bavinck and H. van Haeringen,
Difference equations for generalizations of Meixner polynomials,
J. Math. Anal. Appl. {\bf 184} (1994), 453--463.

\bibitem{BK} H. Bavinck and R. Koekoek,
On a difference equation for generalizations of Charlier polynomials,
J. Approx. Theory {\bf 81} (1995), 195--206.

\bibitem{BGR} C. Brezinski, L. Gori and A. Ronveaux (Eds.),
\textit{Orthogonal polynomials and their applications},
IMACS Annals on Computing and Applied Mathematics (No. 9), J.C. Baltzer AG, Basel, 1991.

\bibitem{Chr} E.B. Christoffel,
\"Uber die Gaussische Quadratur und eine Verallgemeinerung derselben,
J. Reine Angew. Math. {\bf 55} (1858), 61--82.


\bibitem{du0} A.J. Durán,
The Stieltjes moments problem for rapidly decreasing functions,
Proc. Amer. Math. Soc. \textbf{107} (1989), 731--741.


\bibitem{du1} A.J. Durán,
Orthogonal polynomials satisfying  higher order difference equations,  Constr. Approx. \textbf{36} (2012), 459-486.


\bibitem{DD} A.J. Durán,
Using $\D$-operators to find orthogonal polynomials satisfying higher order difference or differential equations,
Submitted. ArXiv 1302.0881 [math.CA].

\bibitem{GR} {M. Gasper and G. Rahman},
\textit{Basic Hypergeometric Series},
Encyclopedia of Mathematics and its Applications (No. 96), Cambridge University Press
(2nd edition), Cambridge, 2004.




\bibitem{GrH1}  F.A. Gr\"unbaum and L. Haine,
Orthogonal polynomials satisfying differential equations: the role of the Darboux transformation,
in: D. Levi, L. Vinet, P. Winternitz (Eds.), Symmetries an Integrability of
Differential Equations, CRM Proc. Lecture Notes, vol. 9, Amer. Math. Soc. Providence, RI, 1996, 143--154.

\bibitem{GrH2}
F. A. Gr\"unbaum and L. Haine,
The $q$-version of a theorem of Bochner,
J. Comput. Appl. Math. \textbf{68}  (1996), 103--114.

\bibitem{GrHH}
F. A. Gr\"unbaum, L. Haine and E. Horozov,
Some functions that generalize the Krall-Laguerre polynomials,
J. Comput. Appl. Math. \textbf{106}, 271--297 (1999).

\bibitem{GrY}
F. A. Gr\"unbaum and M. Yakimov,
Discrete bispectral Darboux transformations from Jacobi operators.
Pac. J. Math., \textbf{204}, 395--431 (2002).


\bibitem{GPV}
J. J. Guadalupe, M. Pérez, F. J. Ruiz and J. L. Varona,
Asymptotic behaviour of orthogonal polynomials relative to measures with mass points,
Mathematika \textbf{40} (1993), 331-344.


\bibitem{HI}
L. Haine and P. Iliev,
Askey-Wilson type functions with bound state,
Ramanujan J. \textbf{11} (2006), 285--329.

\bibitem{Plamen1} P. Iliev,
Krall-Jacobi commutative algebras of partial differential operators,
J. Math. Pures Appl. 96 (2011) 446-461.

\bibitem{Plamen2} P. Iliev,
Krall-Laguerre commutative algebras of ordinary
differential operators,
Ann. Mat. Pur. Appl. DOI 10.1007/s10231-011-0219-9


\bibitem{koekoe} J. Koekoek and R. Koekoek,
On a differential equation for Koornwinder's generalized Laguerre polynomials,
Proc. Amer. Math. Soc. \textbf{112} (1991), 1045--1054.

\bibitem{koe} R. Koekoek,
Differential Equations for Symmetric Generalized Ultraspherical Polynomials,
Trans. Amer. Math. Soc. \textbf{345} (1994), 47--72.

\bibitem{koekoe2} J. Koekoek and R. Koekoek,
Differential equations for generalized Jacobi polynomials,
J. Compt. Appl. Math. \textbf{126} (2000), 1--31.

\bibitem{KLS} R. Koekoek, P. A. Lesky and L.F. Swarttouw,
Hypergeometric orthogonal polynomials and their $q$-analogues,
Springer Verlag, Berlin, 2008.

\bibitem{Kr1}\textrm{H. L. Krall},
Certain differential equations for Tchebycheff polynomials,
Duke Math. $\mathbf{4}$ (1938), 705--718.

\bibitem{Kr2}\textrm{H. L. Krall},
On orthogonal polynomials satisfying a certain fourth order differential equation,
The Pennsylvania State College Studies, No. 6, 1940.

\bibitem{L1} L. L. Littlejohn,
The Krall polynomials: a new class of orthogonal polynomials,
Quaest. Math.  {\bf 5} (1982), 255--265.


\bibitem{L2} L. L. Littlejohn,
An application of a new theorem on orthogonal polynomials and differential equations,
Quaest. Math.  {\bf 10} (1986), 49--61.


\bibitem{Sz} G. Szeg\"o,
Orthogonal Polynomials,
American Mathematical Society, Providence, RI, 1959.


\bibitem{VZ} L. Vinet and A. Zhedanov,
Little $q$-Jacobi polynomials as eigensolutions of higher-order $q$-difference
operators,
Proc. Amer. Math. Soc. \textbf{107} (1999), 1--20.


\bibitem{Y}
G.J. Yoon,
Darboux Transforms and orthogonal polynomials,
Bull. Korean Math. Soc. \textbf{39}, (2002) 359--376.

\bibitem{Zh} A. Zhedanov,
A method of constructing Krall's polynomials,
J. Compt. Appl. Math. \textbf{107} (1999), 1--20.

     \end{thebibliography}
     \end{document}